 \newcounter{mainthm}
\newtheoremstyle{thm_style}
{10pt}
{7pt}
{\itshape}
{}
{\bfseries}
{}
{.5em}
{}
\theoremstyle{thm_style}
\newtheorem{thm}{Theorem}[section]
\newtheorem{lem}[thm]{Lemma}
\newtheorem{prop}[thm]{Proposition}
\newtheorem{thm_intro}[thm]{Theorem}
\newtheorem{conjecture}[thm]{Conjecture}
\newtheorem{cor}[thm]{Corollary}
\newtheorem{defn-thm}[thm]{Definition-Theorem}
\newtheorem{defn-lem}[thm]{Definition-Lemma}
\newtheorem{defn}[thm]{Definition}
 \newtheorem{ex}[thm]{Example}
\newtheoremstyle{rmk}
{5pt}
{5pt}
{}
{}
{\itshape}
{}
{.5em}
{}
\theoremstyle{rmk}
\newtheorem{rmk}[thm]{Remark}
\newtheoremstyle{note}
{5pt}
{5pt}
{\itshape}
{10pt}
{\bfseries}
{}
{.5em}
{}
\theoremstyle{note}
\newtheorem*{question}{Question}
\newcommand{\m}{\mathfrak m}
\newcommand{\f}{\mathfrak f}
\newcommand{\ml}{\mathfrak l}
\newcommand{\mk}{\mathfrak k}
\newcommand{\q}{\mathfrak q}
\titleformat{\subsubsection}[runin]{\itshape\normalsize}{\S \thesubsubsection\ }{0em}{}[\mbox{. } ]
\setlist[description]{font=\normalfont\itshape\textbullet\space}
\scriptsize \vspace{0pt}}%
\footnotesize \vspace{1pt}}%
\titleformat{\subsection}[runin]{
	\bfseries
	\normalsize}{\thesubsection \ }{0em}{}[\mbox{ . } ]
\numberwithin{equation}{section}
\begin{document}
\title
[
An open-closed string analogue of Hochschild cohomology
]{
An open-closed string analogue of Hochschild cohomology
}

\author[Hang Yuan]{ Hang Yuan }

\begin{abstract} {\sc Abstract:}
We prove that every open-closed homotopy algebra, introduced by Kajiura and Stasheff, naturally gives rise to an open-closed version of Hochschild cochain complex whose cohomology admits a canonical Gerstenhaber algebra structure.
We also develop the open-closed brace relations, provide a concise description of OCHAs, and establish an $A_\infty$ structure that extends the open-closed Hochschild differential.
\end{abstract}

\maketitle
%
%

\tableofcontents

\hypersetup{
	colorlinks=true,
	linktoc=all,
	citecolor=gray
}

\setlength{\parindent}{5.5mm}	\setlength{\parskip}{0em}

\section{Introduction}

The principle of noncommutative differential calculus is that differential-geometric operations on coordinate algebras can be often generalized to apply to any associative algebra \cite{connes1985non,gel1989variant,tamarkin2001cyclic}.
Many geometric objects for a smooth manifold $M$ can be defined in terms of the algebra $A=C^\infty(M)$.
For instance, the Lie algebra of vector fields on $M$ correspond to the Lie algebra of derivations of $A$.
The space of polyvector fields admits a structure of a Gerstenhaber algebra, and its noncommutative analogue is known to be the Hochschild cochain complex
$C^\bullet (A,A)=\prod_{k\geqslant 1} \mathrm{Hom}(A^{\otimes k} , A)$ with the differential induced by the multiplication in $A$. Its cohomology is called \textit{Hochschild cohomology}.

For any associative algebra $A$, Gerstenhaber proved that the Hochschild cohomology has the structure of a Gerstenhaber algebra \cite{Gerstenhaber_1963cohomology}.
For an $A_\infty$ algebra on a fixed graded vector space $A$, the Hochschild cohomology can be defined in a similar way and still admits a structure of a Gerstenhaber algebra; see e.g. \cite{getzler1994operads,tradler2008batalin}.
The emphasis on $A_\infty$ algebras, rather than just associative algebras, is intended to motivate us to consider these structures from the perspective of string or topological field theory. Indeed, classical open string field theory has an $A_\infty$  structure \cite{gaberdiel1997tensor, kajiura2002homotopy, nakatsu2002classical}. As described by Zwiebach \cite{zwiebach1993closed, zwiebach1998oriented} and many others, string field theory is presented through the moduli space of surfaces.
An $A_\infty$ algebra corresponds to the moduli space of disks with marked points on the boundary circles; see \cite{FOOOBookOne,FOOO_Kuranishi_book,Yuan_unobs}.
What would happen if we added extra marked interior points to these disks, or "open-strings", to give "open-closed" deformations of $A_\infty$ structures?
Will the theory of Hochschild cohomology and Gerstenhaber algebra structure still persist?
We aim to give a positive answer.

Let $A$ and $B$ be graded vector spaces, treated morally as "open-string" and "closed-string" inputs respectively. We consider the following open-closed analogues of $C^\bullet (A, A)$:
\begin{equation}
	\label{intro_BAA_eq}
C^{\bullet, \bullet}(B; A, A) =\prod_{k,\ell\geqslant 0, \ (\ell,k)\neq (0,0)} \mathrm{Hom} (B^{\wedge \ell}\otimes A^{\otimes k} , A)
\end{equation}
where the superscript $\wedge \ell$ indicates the multilinear maps are graded symmetric for the inputs from $B$.
If the structure of an $A_\infty$ algebra is what makes $C^\bullet (A, A)$ a cochain complex whose cohomology is a Gerstenhaber algebra, then a natural question arises: What is the structure that makes $C^{\bullet, \bullet}(B; A, A)$ a cochain complex whose cohomology is also a Gerstenhaber algebra?
The discovery is that the answer lies in the structure of an \textit{open-closed homotopy algebra (\textbf{OCHA})}.
Our main result is the following:

\begin{thm_intro}
	\label{intro_main_thm}
	Every open-closed homotopy algebra $(B, A, \ml, \q)$ naturally induces a differential $\delta=\delta_{(\ml,\q)}$ on $C^{\bullet,\bullet}(B; A, A)$.
Moreover, its cohomology has a canonical Gerstenhaber algebra structure.
\end{thm_intro}

Kajiura and Stasheff introduce the concept of OCHA in \cite{ocha_kajiura_cmp,ocha_kajiura_survey}, inspired by Zwiebach's open-closed string field theory \cite{zwiebach1998oriented}. This gives a mixed scheme to include both open strings ($A_\infty$ algebras) and closed strings ($L_\infty$-algebras).
By definition, an open-closed homotopy algebra on $(B,A)$ is a pair of an $L_\infty$ algebra $\ml=\{\ml_\ell\}$ on the space of graded symmetric maps $\tilde C^\bullet (B,B)=\prod_{\ell\geqslant 1} \mathrm{Hom} (B^{\wedge \ell}, B)$ and an element $\q=\{\q_{\ell,k}\}$ in the space $C^{\bullet,\bullet}(B; A, A)$ in (\ref{intro_BAA_eq}) with various compatability conditions.
Note that the subcollection $\bar \q=\{\q_{0,k}\}$ actually forms an $A_\infty$ algebra.
Conversely, as there is a natural embedding from $C^\bullet(A, A)$ into $C^{\bullet,\bullet}(B; A, A)$ by identifying $C^k(A, A)$ with $C^{0,k}(B; A, A)$, one can also regard an $A_\infty$ algebra as an OCHA.
In particular, our theorem is a strict generalization or deformation of the standard Gerstenhaber theory for $C^\bullet (A,A)$.
Recall that the OCHA operad is a colored operad with two colors representing open and closed strings. It is linked to the "Swiss-cheese operad" \cite{voronov1999swiss}, which combines the little disks operad and the little intervals operad, and it also relates to Kontsevich's deformation quantization \cite{kontsevich1999operads}.
In reality, a relevant result of Hoefel is that the OCHA operad is quasi-isomorphic to the operad of the top-dimensional homology classes of the Swiss-cheese operad \cite{ocha_hoefel_swiss_cheese}. Hoefel and Livernet also investigate its Koszul operad theory; we note that they show the OCHA operad is non-formal \cite{ocha_hoefel2012open}.


One of the simplest examples of an OCHA arises from a smooth map between manifolds 
$i: L \to M$ in a natural manner:
We define $B = \Omega^*(M)$ and $A = \Omega^*(L)$ to be the de Rham complexes of $M$ and $L$ respectively. The operations are specified as follows: (see Example \ref{ex_OCHA})
\begin{itemize}
	\itemsep 0pt 
	\item $\ml_1=d_M$, the exterior derivative on $M$;
	\item $\mathfrak q_{0,1}=d_L$, the exterior derivative on $L$;
	\item $\mathfrak{q}_{0,2}$ is the wedge product $\wedge=\wedge_L$ on $\Omega^*(L)$;
	\item $\mathfrak{q}_{1,0} = i^*$ is the pullback map $i^*: \Omega^*(M) \to \Omega^*(L)$;
	\item All other higher operations $\ml_\ell$ and $\q_{\ell,k}$ are set to zero.
\end{itemize}
Furthermore, Fukaya, Oh, Ohta, and Ono have also geometrically realized a special type of OCHA that extends the above example, which they called the "operator $\q$" \cite{FOOOBookOne, FOOOSpectral}.
Intuitively, the OCHA structure in the components $C^{\ell,k}(B; A, A)=\mathrm{Hom}(B^{\wedge \ell} \otimes A^{\otimes k} , A)$ should usually correspond to the moduli spaces $\mathcal M_{\ell,k+1}$ of holomorphic disks with $\ell$ interior marked points and $k+1$ boundary marked points; such a disk is \textit{stable}, meaning that it has a finite automorphism group, if and only if $2\ell + k + 1 \geqslant 3$ \cite{deligne1969irreducibility,kontsevich1995enumeration,FOOODiskOne}. 
This morally explains the condition $(\ell,k) \neq (0,0)$ in (\ref{intro_BAA_eq}).

Be aware that we \textit{exclude} the case $k=0$ in $C^\bullet(A,A)$ to avoid dealing with \textit{curved} $A_\infty$ structures. While we can formally add the component $C^0(A,A)\equiv A$, this will make the definition of curved $A_\infty$ morphisms or curved OCHA morphisms into troubles due to the "infinite sum issues" (see Remark \ref{infinite_sum_issue_rmk}).
Accordingly, the space $C^{\bullet,\bullet}(B; A, A)$ slightly differs from the space of maps from $\oplus B^{\wedge\ell}$ to $C^{\bullet}(A,A)$.
Therefore, in general, we cannot simply interpret a (non-curved) OCHA $(\ml,\q)$ as an $L_\infty$ morphism from $(B,\ml)$ to the graded Lie algebra structure on $C^\bullet(A,A)$ induced by the $A_\infty$ algebra $\bar \q=\{\q_{0,k}\}$; cf. \cite[7.4.3]{FOOOBookTwo}.
In short, an OCHA is really a "mixed" structure.
For instance, we allow $k=0$ whenever $\ell> 0$, leading to various extra components $C^{\ell,0}(B; A, A)=\mathrm{Hom}(B^{\wedge \ell},A)$ for all $\ell>0$.

For the proof, we need to introduce an open-closed version of brace operations $D\{E_1,\dots, E_n\}$ on $C^{\bullet,\bullet}(B; A, A)$ that imitates the ones on $C^{\bullet}(A, A)$ in the literature \cite{Kadeishvili_1988structure,Getzler_1993cartan}.
We can show similar properties. But, there are some correction terms, and the open-closed Hochschild differential $\delta = \delta_{(\ml, \q)}$ in the theorem will roughly take the form:
\[
\delta(D) =    \sum \pm \q( \cdots; \cdots D(\cdots; \cdots) \cdots) \pm D(\cdots; \cdots, \q(\cdots; \cdots),\cdots) \pm   D(\ml(\cdots),\cdots ; \cdots)
\]
where the first two terms are $\q\{D\}\pm D\{\q\}$ resembling the usual definition of the Hochschild differential on $C^{\bullet}(A, A)$ but the last term morally introduces new contributions or deformations from "closed strings".
In reality, the $L_\infty$ algebra $\ml$ gives rise to a natural map $\widehat{\ml}: C^{\bullet,\bullet}(B; A, A)\to C^{\bullet,\bullet}(B; A, A)$ so that the above last term becomes $\pm \widehat{\ml}(D)$.
A crucial aspect is to examine how the deformations from $\widehat{\ml}$ interacts with our open-closed brace operations. The proof of the theorem will be then derived from these interactions.
Moreover, as an $A_\infty$ algebra structure on $C^\bullet(A, A)$ induces an $A_\infty$ algebra structure on $C^\bullet(C^\bullet(A, A),C^\bullet(A, A))$ that extends the usual Hochschild differential (see \cite[Proposition 1.7]{Getzler_1993cartan}), we can also obtain an open-closed analogous result:

\begin{prop}
	\label{M_prop}
	Given an open-closed homotopy algebra $(B,A,\ml,\q)$, the open-closed brace operations (see \S \ref{s_open_closed_brace}) naturally induces an $A_\infty$ algebra structure $M=\{M_k\}_{k\geqslant 1}$ on $C^\bullet \big(C^{\bullet,\bullet}(B;A,A), C^{\bullet,\bullet}(B;A,A) \big)$ with $M_1=\delta_{(\ml,\q)}$.
\end{prop}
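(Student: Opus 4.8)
The plan is to mimic Getzler's construction of the $A_\infty$ structure on the Hochschild complex of a homotopy-associative algebra, but applied to the \emph{open-closed} brace operations $D\{E_1,\dots,E_n\}$ introduced in \S\ref{s_open_closed_brace}, with the deformation term $\widehat{\ml}$ absorbed appropriately. Concretely, I would first set $\Phi = (\ml,\q) \in C^{\bullet,\bullet}(B;A,A) \oplus \tilde C^\bullet(B,B)$, viewed as the total structure element, and define, for $D_1,\dots,D_k \in C^{\bullet,\bullet}(B;A,A)$,
\begin{equation}
	\label{M_def_eq}
	M_k(D_1,\dots,D_k) = \pm\, \Phi\{D_1,\dots,D_k\} \quad (k\geqslant 2), \qquad M_1(D) = \delta_{(\ml,\q)}(D),
\end{equation}
where $\Phi\{D_1,\dots,D_k\}$ is shorthand for the sum of $\q\{D_1,\dots,D_k\}$ together with the correction terms coming from $\ml$ acting through $\widehat{\ml}$ on one of the slots (only the $k=1$ case receives the $\widehat{\ml}(D)$ contribution, since $\ml$ inserts into closed-string inputs which are already "filled" once $D_1$ occupies the open-string slot — this bookkeeping is exactly where the correction terms in the excerpt's displayed formula for $\delta$ enter). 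The signs are the Koszul signs dictated by the grading conventions fixed earlier in the paper.

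The key steps, in order: (1) Verify $M_1 = \delta_{(\ml,\q)}$; this is essentially the definition of $\delta$ recalled in the introduction, so it is immediate once the shorthand is unwound. (2) Establish the \emph{open-closed brace relations} — the associativity-type identities for iterated braces $D\{E_1\{\cdots\},\dots\}$ — as proved in \S\ref{s_open_closed_brace}; these are the analogue of the classical brace identity $D\{E_1,\dots\}\{F_1,\dots\} = \sum \pm D\{\dots,E_i\{F_j,\dots\},\dots\}$, now with correction terms governed by how $\widehat{\ml}$ interleaves with the braces. (3) Expand $\sum_{i+j=k+1}\sum \pm M_i(D_1,\dots,M_j(\cdots),\dots,D_k)$ using \eqref{M_def_eq} and the brace relations; the terms involving two applications of $\q$ cancel in pairs exactly as in Getzler's proof, while the terms mixing $\q$ and $\ml$ must cancel against (4) the $L_\infty$ relations for $\ml$ and the OCHA compatibility conditions relating $\ml$ and $\q$. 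Assembling (3)–(4) gives the $A_\infty$ relation $\sum \pm M_i(\cdots M_j \cdots) = 0$ for all $k$.

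The main obstacle I expect is Step (4): tracking the precise form of the correction terms and confirming that every term not killed by the pure-brace combinatorics is matched by a term arising from either an $L_\infty$ Jacobi relation for $\ml$ or an OCHA structure equation for $\q$. Because $C^{\bullet,\bullet}(B;A,A)$ does \emph{not} split as $\mathrm{Hom}(\bigoplus B^{\wedge\ell}, C^\bullet(A,A))$ (the $k=0$, $\ell>0$ components and the non-curved restriction break this, as emphasized in the excerpt), one cannot simply quote the product-of-two-known-structures argument; the interaction of $\widehat{\ml}$ with braces has to be checked by hand, and getting the signs consistent across the three sources of terms is the delicate bookkeeping. A secondary subtlety is well-definedness of the infinite sums defining $M_k$: one must confirm, as in Remark~\ref{infinite_sum_issue_rmk}, that for any fixed tuple of inputs $b_1,\dots,b_\ell \in B$, $a_1,\dots,a_m \in A$ only finitely many terms contribute, which holds because $D_1,\dots,D_k$ and $\q$ each consume at least one input and we have excluded the curved ($k=0$ in $C^\bullet(A,A)$) case. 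Once these points are settled, the $A_\infty$ relations follow formally, and the special case $k=2$ recovers (a shift of) the Gerstenhaber bracket and cup product underlying Theorem~\ref{intro_main_thm}.
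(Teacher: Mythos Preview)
Your approach is essentially the paper's, but several points in your description of the cancellation mechanism are off and would lead you astray if you tried to execute them as written. First, for $k\geqslant 2$ the paper simply sets $M_k(D_1,\dots,D_k)=\q\{D_1,\dots,D_k\}$ with no $\ml$-correction whatsoever; the only place $\widehat{\ml}$ enters is through $M_1=\delta$. Second, the open-closed brace relation (Lemma~\ref{brace_brace_lem}) is \emph{identical in form} to the classical one, with no $\widehat{\ml}$-corrections; what is new is a separate derivation-type identity (Lemma~\ref{brace_with_closed_string_action_lem}) stating that $\widehat{\ml}\big(D\{E_1,\dots,E_m\}\big) = \pm\,\widehat{\ml}(D)\{E_1,\dots,E_m\} + \sum_i \pm\, D\{\dots,\widehat{\ml}(E_i),\dots\}$. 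Third, and most importantly, the pure-$\q$ terms in the $A_\infty$ relation do \emph{not} cancel in pairs: after applying the brace relation they assemble into $\q\{\q\}\{D_1,\dots,D_k\}$, which is nonzero here (unlike in Getzler's setting where $\m\{\m\}=0$). Meanwhile the $\widehat{\ml}$-terms, all arising from a single $M_1$, assemble via Lemma~\ref{brace_with_closed_string_action_lem} into $-\,\widehat{\ml}(\q)\{D_1,\dots,D_k\}$, and these two residues cancel against each other precisely by the OCHA equation $\q\{\q\}=\widehat{\ml}(\q)$. In particular, the $L_\infty$ Jacobi relations for $\ml$ play no role for $k\geqslant 2$; they enter only at $k=1$, via $\widehat{\ml}\,\widehat{\ml}=0$ in the proof that $\delta\delta=0$.
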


The resolutions of Deligne's Hochschild cohomological conjecture \cite{tamarkin1998another, kontsevich2000deformations, mcclure2002solution, voronov2000homotopy} indicate that the space $C^\bullet (A, A)$ possesses the remarkable structure of a homotopy Gerstenhaber algebra, known as a $E_2$ algebra \cite{kontsevich2000deformations} or a $G_\infty$ algebra \cite{tamarkin2000noncommutative}.
Also, Tamarkin proves that the chain operad of little disks is formal, meaning that it is quasi-isomorphic to its homology operad, known to be the operad of Gerstenhaber algebras \cite{tamarkin2003formality}, and that this homology operad acts on the cochain complex $C^{\bullet}(A, A)$ \cite{tamarkin1998another}.
Given the above background, it is natural to speculate on an open-closed string version of Deligne's conjecture, for which we hope to explore an affirmative resolution elsewhere in the future:

\begin{conjecture}
	For any OCHA $(\ml, \q)$, there is a $G_\infty$ algebra structure on the space $C^{\bullet,\bullet}(B; A, A)$.
\end{conjecture}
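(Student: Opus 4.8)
The plan is to promote the $A_\infty$ (equivalently $E_1$) structure $M=\{M_k\}$ of Proposition~\ref{M_prop} to a genuine $E_2$, i.e.\ $G_\infty$, structure, by running an open-closed version of the brace-algebra proof of the classical Deligne conjecture (Gerstenhaber--Voronov, Kontsevich--Soibelman \cite{kontsevich2000deformations}, McClure--Smith \cite{mcclure2002solution}, Voronov \cite{voronov2000homotopy}, Tamarkin \cite{tamarkin1998another}). The first step is to show that the open-closed brace operations of \S\ref{s_open_closed_brace}, together with a cup product $D_1\cup D_2$ built from $\q_{0,2}$ (more precisely, from the $A$-multiplication part of $\bar\q$), make $C^{\bullet,\bullet}(B;A,A)$ into a homotopy Gerstenhaber algebra \emph{up to $\widehat{\ml}$-corrections}: the brace relations and the distributivity of braces over $\cup$ hold on the nose --- since the braces feed only into the ``outer'' $A$-slots and are insensitive to the symmetrically assembled $B$-inputs --- while the only modification is that the differential is $\delta_{(\ml,\q)}=\q\{-\}\pm\{-\}\{\q\}\pm\widehat{\ml}$ rather than its first two terms alone, exactly the discrepancy already visible in the proof of Theorem~\ref{intro_main_thm}. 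Equivalently, one wants a $B_\infty$-algebra structure on $C^{\bullet,\bullet}(B;A,A)$ whose higher multiplications $m_{p,q}$ vanish for $p\geqslant 2$ and whose bar differential is twisted by the Maurer--Cartan-type element coming from $\ml$.

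With that in hand, the plan is to encode this structure as an action of an explicit chain model of the little $2$-disks operad. Concretely, we would build an open-closed analogue of the surjection/$\mathcal X$-operad of McClure--Smith (or of the Kontsevich--Soibelman minimal operad), in which the $\widehat{\ml}$-corrections are carried by an extra family of generating cells whose boundary relations reflect the degenerations of the moduli spaces $\mathcal M_{\ell,k+1}$ of disks with interior and boundary marked points; the cup product and braces are recovered from the low-dimensional cells, and the operations $M_k$ of Proposition~\ref{M_prop} sit inside the resulting action. The decisive point to check is that this enlarged operad is still quasi-isomorphic to the chains of little $2$-disks --- i.e.\ that the $\widehat{\ml}$-cells contribute only acyclic pieces and do not alter the homotopy type --- so that the action is a bona fide $G_\infty$-structure. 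An alternative, more formal-looking route starts from Tamarkin's theorem that the little $2$-disks operad is formal \cite{tamarkin2003formality} and that the Gerstenhaber operad acts on $C^\bullet(A,A)$, and then transports this action along the $L_\infty$-data: produce an $L_\infty$-morphism from $(B,\ml)$ into the $E_2$-deformation object of $C^\bullet(A,A)$ that is compatible with the mixed maps $\q$, and twist the $E_2$-action on $C^\bullet(A,A)$ by the corresponding Maurer--Cartan element to obtain the $E_2$-action on $C^{\bullet,\bullet}(B;A,A)$; the embedding $C^\bullet(A,A)\hookrightarrow C^{\bullet,\bullet}(B;A,A)$ would then be the $\ell=0$ part of this action.

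The main obstacle is precisely the interaction of the closed-string corrections $\widehat{\ml}$ with the $E_2$-combinatorics. Although the statement asks only for a single-colored $G_\infty$-structure on $C^{\bullet,\bullet}(B;A,A)$, these corrections are genuinely governed by two-colored Swiss-cheese combinatorics \cite{ocha_hoefel_swiss_cheese}, and the Swiss-cheese/OCHA operad is \emph{not} formal (Hoefel--Livernet \cite{ocha_hoefel2012open}); hence the transport/twisting above cannot be purely formal and must drag along the entire tower of higher brackets $\ml_\ell$ in an essential way, while in the explicit-operad approach one must verify by hand that the correction cells assemble coherently, i.e.\ that one obtains an action of an $E_2$-operad and not merely of the $E_1$-operad already accounted for by Proposition~\ref{M_prop}. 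Making these corrections fit into a homotopy-coherent $2$-disks action, rather than destroying the higher brace identities up to homotopy, is the crux, and is what we defer to future work.
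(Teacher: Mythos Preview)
This statement is a \emph{conjecture} in the paper, not a theorem; the paper offers no proof and explicitly defers it to future work. Your proposal is likewise a programme rather than a proof, and you acknowledge as much in your final paragraph. So there is nothing in the paper to compare against; one can only assess the plan on its merits.

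The first route (adapting the brace-algebra/surjection-operad proofs of the classical Deligne conjecture) is a reasonable strategy, and you correctly identify the crux: checking that the $\widehat{\ml}$-corrections assemble into an action of a genuine $E_2$-operad rather than merely the $E_1$-structure already furnished by Proposition~\ref{M_prop}. One small correction: the cup product should be built from the full $\q$ via $\q\{D_1,D_2\}$ as in (\ref{yoneda_pre_eq}), not from $\q_{0,2}$ alone --- the open-closed Yoneda product uses all of $\q$, and restricting to $\bar\q$ would lose the closed-string deformations you are trying to capture.

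Your alternative route, however, has a genuine gap. You propose to produce an $L_\infty$-morphism from $(B,\ml)$ into the $E_2$-deformation object of $C^\bullet(A,A)$ and then twist by the resulting Maurer--Cartan element. But the paper explicitly warns (see the paragraph after Theorem~\ref{intro_main_thm} and Remark~\ref{infinite_sum_issue_rmk}) that a non-curved OCHA is \emph{not} in general such an $L_\infty$-morphism: the components $\q_{\ell,0}\in C^{\ell,0}(B;A,A)=\mathrm{Hom}(B^{\wedge\ell},A)$ for $\ell>0$ land in $A$ itself, not in $C^\bullet(A,A)=\prod_{k\geqslant 1}C^k(A,A)$, and including them forces curved targets with attendant infinite-sum convergence issues. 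So the transport/twisting picture cannot be set up as stated; any attempt to enlarge the target to accommodate these extra components brings you straight back to the two-colored Swiss-cheese combinatorics you already flag as the obstacle in the first approach.
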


Finally, we offer a few comments for other possible further explorations.
If $A$ is a cyclic $A_\infty$ algebra, the space $C^{\bullet}(A,A)$ is also a homotopy Batalin-Vilkovisky algebra, or $BV_\infty$ algebra, as investigated in various contexts (see \cite{kaufmann2008proof, tradler2006cyclic, costello2007topological, bv_infinity_galvez2012homotopy, kontsevich2008notes,brav2023cyclic}, etc). The Hochschild cohomology also admits a $BV$ algebra structure \cite{tradler2008batalin,menichi2009batalin,ginzburg2006calabi}.
An open-closed generalization of this based on our results should also hold true for the cyclic structure of open-closed homotopy algebra in \cite{ocha_kajiura_cmp}.
Moreover, the Hochschild chain complex of the complex $C^{\bullet}(A, A)$ has a canonical $A_\infty$ algebra structure \cite{tamarkin2005ring}, and one can find the so-called structure of a calculus $\mathrm{Calc}(A)$ that enriches the Gerstenhaber algebra structure on $C^{\bullet}(A,A)$ \cite{tamarkin2001cyclic}.
All these intriguing theories and many others regarding the usual Hochschild cochain complex $C^{\bullet}(A, A)$ can likely lead to analogous outcomes for the open-closed variant $C^{\bullet}(B; A, A)$.
On the other hand, the (open-string) Hochschild cohomology of $A_\infty$ algebras has been successfully applied in the context of mirror symmetry and symplectic topology, e.g. \cite{AuTDual,Yuan_c_1}. In general, it is widely expected that the open-string homological mirror symmetry \cite{KonICM} could infer the closed-string enumerative mirror symmetry, e.g. \cite{ganatra2015mirror}.
An appropriate open-closed structure is expected to serve as a bridge between the two.
We hope that the open-closed analogue of Hochschild cohomology developed in this paper might contribute a step towards understanding this.

\subsection*{Acknowledgment}
The author is very grateful to Ezra Getzler, Ryszard Nest, and Dmitry Tamarkin for helpful conversations. The author would also like to express special thanks to Boris Tsygan for valuable lessons on (homotopy) Gerstenhaber algebras and various related topics.


\section{Preliminaries}
We first introduce some notations and conventions. Define $[k] := \{1, \dots, k\}$. The symmetric group of permutations on $[k]$ is denoted by $S_k$. When we write
\[
I_1 \sqcup \cdots \sqcup I_r = [k]
\]
it represents a partition of $[k]$ into \textit{ordered} subsets $I_j = \{i_1 < i_2 < \cdots\}$ for $1 \leq j \leq r$.
Additionally, for convenience, we introduce the following a "dotted partition”
\begin{equation}
	\label{dot_sqcup_notation_eq}
	I_1 \dot\sqcup \cdots \dot\sqcup I_r = [k]
\end{equation}
which indicates a partition $I_1 \sqcup \cdots \sqcup I_r = [k]$ such that the condition that all elements in $I_i$ are smaller than those in $I_j$ whenever $i < j$. For example, $\{1,2\} \dot\sqcup \{3,4,5\}=[5]$ is a dotted partition, but $\{1,4\}\sqcup \{2,3,5\}=[5]$ is not.

Let $A = \bigoplus_{d \in \mathbb{Z}} A^d$ be a $\mathbb{Z}$-graded vector space over a field $\Bbbk$. Let $d_A: A^\bullet \to A^{\bullet+1}$ be a \textit{differential}. The degree of $a \in A$ is denoted by $\deg (a)$. For convenience, we often consider the \textit{shifted degree}:
\begin{equation}
\label{shifted_degree_original_eq}
|a|=\deg (a) - \jmath 
\end{equation}
Here we may choose different $\jmath\in\mathbb Z$ according to the context, such as $\jmath=1$ or $-1$. Note that different sign conventions may exist in various contexts (see e.g. \cite{FOOOBookOne, Yuan_I_FamilyFloer, stasheff1963homotopy, ocha_kajiura_cmp,getzler1994operads}, etc.), and we apologize for potential confusions.

Suppose $\varphi: A^{\otimes k}\to A'$ is a graded multilinear map of degree $\deg (\varphi)=p$. That is to say, the degree of $\varphi(a_1,\dots, a_k)$ for $a_1,\dots, a_k\in A$ is given by
\[
\deg (\varphi(a_1,\dots, a_k)) = \deg (\varphi) + \deg (a_1)+\cdots +\deg (a_k)
\]
Transforming the above equation into the context of shifted degrees yields
\[
|\varphi(a_1,\dots, a_k)| + \jmath  = \deg(\varphi) + |a_1|+\cdots + |a_k| + k \cdot \jmath
\]
For simplicity, we want to artificially define the shifted degree of the multilinear map $\varphi$ as
\[
|\varphi| :=\deg(\varphi)+ (k-1) \cdot \jmath
\]
Then, we have
\[
|\varphi(a_1,\dots, a_k)| =|\varphi| +|a_1|+\cdots +|a_k|
\]

\subsection{$A_\infty$ algebras}
\label{ss_A_infinity_alg}
Define $C^k(A,A')$ to be the space of multilinear maps $A^{\otimes k}\to A'$.
Define 
\begin{equation}
	\label{hochschild_cochain_eq}
C^\bullet (A, A')=\prod_{k\geqslant 1} C^k(A,A')
\end{equation}
An \textit{$A_\infty$ algebra} is an element $\m=\{\m_k : k\geqslant 1\}$ in $C^\bullet (A, A)$ such that its shifted degree is $|\m|=|\m_k|=1$, the first term $\m_{1}=d_A$ agrees with the differential, and
	\[
	\sum_{\substack{ k_1+k_2=k+1 \\ k_1, k_2\geqslant 1}}  \sum_{i=1}^{k_1+1} (-1)^\ast \ \m_{k_1} (x_1,\dots, \m_{k_2} (x_i,\dots, x_{i+k_2-1}), \dots, x_k ) =0
	\]
	where $\ast = \sum_{j=1}^{i-1} |x_j|$.
	Note that the concise sign condition $|\m|=1$ is due to the use of shifted degree.	
Given two $A_\infty$ algebras $(A,\m)$ and $(A',\m')$, an \textit{$A_\infty$ homomorphism} from $(A,\m)$ to $(A',\m')$ is an element $\f=\{\f_k: k\geqslant 1\}$ in $C^\bullet (A, A')$ such that $|\f|=|\f_k|=0$ and 
	\begin{align*}
		&\sum_{0=j_0\leqslant \cdots \leqslant j_r = k}
		\m'_{r} \big(
		\f_{j_1-j_0} (x_1,\dots) , \dots , \f_{j_\ell-j_{\ell-1}} (\dots, x_k) \big)
		\\
		&=
		\sum_{\lambda+\mu+\nu=k} (-1)^\ast \ \f_{\lambda+\mu+1} ( x_1,\dots, x_{\lambda}, \m_{\nu} (x_{\lambda+1},\dots, x_{\lambda+\nu}),\dots, x_k )
	\end{align*}
where $\ast=\sum_{j=1}^\lambda |x_j|$.
By definition, $\f_{1}$ is a cochain map from $(A, \m_{1}=d_A)$ to $(A', \m'_{1}=d_{A'})$.

To simplify the formulas, we introduce
\begin{equation}
	\label{x_I_eq}
	x_I= x_{i_1}\otimes \cdots \otimes  x_{i_n}
\end{equation}
for an ordered subset $I=\{i_1< \cdots <i_n\}$. If we set 
\[
|x_I|=|x_{i_1}|+\cdots + |x_{i_n}|
\]
then we may concisely write
\[
\sum \m' (\f(x_{I_1}),\dots, \f(x_{I_r})) =\sum (-1)^{|x_I|} \ \f(x_I, \m(x_J), x_K)
\]
where the summations run over the dotted partitions $I_1\dot\sqcup \cdots \dot\sqcup I_r=I\dot\sqcup J\dot\sqcup K=[k]$ as in (\ref{dot_sqcup_notation_eq})

\subsection{$L_\infty$ algebras}
\label{ss_L_infinity_alg}
A multilinear map $f: B^{\otimes \ell}\to B'$ of graded vector spaces $B,B'$ is called \textit{graded symmetric} if 
$
f(y_{\sigma(1)},\dots, y_{\sigma(\ell)})=(-1)^{\epsilon(\sigma)} f(y_1,\dots,y_\ell)
$
for all $y_i\in B$, all permutations $\sigma \in S_\ell$, and the sign is
\begin{equation}
	\label{sign_perm}
	{\epsilon(\sigma)} = \epsilon(\sigma; y_1,\dots, y_\ell) = \sum_{i<j: \sigma(i)>\sigma(j)} |y_i| \cdot |y_j|
\end{equation}
Let $I_{\ell}$ be the subspace of $B^{\otimes \ell}$ generated by 
$
y_1\otimes \cdots \otimes y_\ell - (-1)^{\epsilon(\sigma)} y_{\sigma(1)}\otimes \cdots \otimes y_{\sigma(\ell)}
$
for $y_i\in B$, $\sigma \in S_\ell$, and the sign ${\epsilon(\sigma)}$ as in (\ref{sign_perm}).

Define 
$
B^{\wedge \ell} := B^{\otimes \ell} / I_{\ell}
$
and call it the $\ell$-th {graded symmetric tensor power} of $B$.
Alternatively, $B^{\wedge \ell}$ can be characterized by the following universal property: there is a canonical graded symmetric multilinear map $\varphi: B^{\times \ell} \to B^{\wedge \ell}$ such that for every graded symmetric multilinear map $f: B^{\times \ell} \to E$, there is a unique linear map $f_{\wedge}: B^{\wedge\ell}\to E$ with $f(y_1,\dots, y_\ell)=(-1)^{\epsilon(\sigma)} f_{\wedge}(\varphi(y_1,\dots, y_\ell))$.
From now on, we write $y_1\wedge\cdots \wedge y_\ell$ for the image $\varphi(y_1,\dots, y_\ell)$.
Then,
\begin{equation}
	\label{wedge_sign}
y_1\wedge \cdots \wedge y_\ell = (-1)^{\epsilon(\sigma)}y_{\sigma(1)}\wedge \cdots \wedge y_{\sigma(\ell)}
\end{equation}
This is also known as the "Koszul sign convention".
Slightly abusing the notation, we imitate (\ref{x_I_eq}) to introduce 
\begin{equation}
	\label{y_J_eq}
	y_J=y_{j_1}\wedge \cdots \wedge y_{j_n}
\end{equation}
for an ordered subset $J=\{j_1<\cdots <j_n\}$. We also set 
\[
|y_J|=|y_{j_1}|+\cdots +|y_{j_n}|
\]

Recall that an $(\ell_1,\ell_2)$-\textit{unshuffle} of $y_1,\dots, y_\ell$ with $\ell=\ell_1+\ell_2$ is a permutation $\sigma$ such that for $1\leqslant i<j \leqslant \ell_1$, we have $\sigma(i)<\sigma(j)$ and similar for $\ell_1+1\leqslant i<j\leqslant \ell$ (see e.g. \cite[\S 2.4]{ocha_kajiura_cmp}).
Observe that it is equivalent to a partition of $[\ell]=\{1,\dots,\ell\}$ into two ordered subsets $J_1\sqcup J_2=[\ell]$ with $|J_1|=\ell_1$ and $|J_2|=\ell_2$. Also, by (\ref{sign_perm}) and (\ref{wedge_sign}), the sign $\epsilon = \epsilon(\sigma)$ of this unshuffle can be described as 
\[
y_{[\ell]}=(-1)^{\epsilon} \ y_{J_1}\wedge y_{J_2}
\]

Let $(B,d_B)$ and $(B', d_{B'})$ be differential graded vector spaces.
Define $\tilde C^\ell (B, B')=\mathrm{Hom} (B^{\wedge \ell} , B')$ to be the space of graded symmetric multilinear maps $B^{\wedge \ell}\to B'$.
Define 
\[
\tilde C^\bullet (B, B') =\prod_{\ell\geqslant 1} \tilde C^\ell (B, B')
\]
An \textit{$L_\infty$ algebra} is an element $\ml=\{\ml_\ell: \ell\geqslant 1\}$ in $\tilde C^\bullet(B, B)$ such that $|\ml|=|\ml_{\ell}|=1$, $\ml_{1}=d_B$, and 
	\[
	\sum_{J_1\sqcup J_2=[\ell]}  (-1)^\epsilon \ \ml_{|J_2|+1} ( \ml_{|J_1|} (y_{J_1}) \wedge y_{J_2} ) =0
	\]
with $\epsilon$ given by $y_{[\ell]}=(-1)^{\epsilon} y_{J_1}\wedge y_{J_2}$.
For two $L_\infty$ algebras $(B,\ml)$ and $(B',\ml')$, an \textit{$L_\infty$ homomorphism} from $(B,\ml)$ to $(B',\ml')$ is an element $\mk=\{\mathfrak k_{\ell}: \ell\geqslant 1\}$ in $\tilde C^\bullet (B, B')$ such that $|\mk|=|\mk_{\ell}|=0$ and
	\begin{align*}
		&\sum_{J_1\sqcup\cdots\sqcup J_r=[\ell]}  (-1)^{\epsilon_1} \  
		\ml'_{r}
		\big(
		\mk_{|J_1|} (y_{J_1}) \wedge \cdots \wedge \mk_{|J_r|} (y_{J_r})
		\big) =
		\sum_{K_1\sqcup K_2=[\ell] }
		(-1)^{\epsilon_2} \
		\mk_{|K_2|} \big( \ml_{|K_1|} (y_{K_1})\wedge y_{K_2} \big)
	\end{align*}
	with $\epsilon_1, \epsilon_2$ characterized by $y_{[\ell]}=(-1)^{\epsilon_1} y_{J_1}\wedge \cdots \wedge y_{J_r}$ and $y_{[\ell]} = (-1)^{\epsilon_2} y_{K_1}\wedge y_{K_2}$.
	Remark that by definition, $\mk_{1}$ gives rise to a cochain map from $(B, d_B=\ml_{1})$ to $(B', d_{B'}=\ml'_{1})$.

A more concise description of the $L_\infty$ structure is provided below.
Given $\mk_1 \in \tilde C (B, B')$ and $\mk_2 \in\tilde C (B',B'')$, we define their (graded symmetric) {\textit{composition}} $\mk_2 \cdot \mk_1$ as an element in $\tilde C^\bullet (B, B'')$ such that
\begin{align*}
	(\mk_2 \cdot \mk_1 )_{\ell} (y_{[\ell]}) =  \sum (-1)^{\epsilon} \ (\mk_2)_{s} \big((\mk_1)_{|M_1|} (y_{M_1})\wedge \cdots \wedge (\mk_1)_{|M_s|} (y_{M_s})\big)
\end{align*}
where the summation is taken over $[\ell]=M_1\sqcup \cdots \sqcup M_s$ and $\epsilon$ is determined by $y_{[\ell]}= (-1)^{\epsilon} y_{M_1}\wedge \cdots \wedge y_{M_s}$.

By the work of Lada-Markle and Oudom-Guin \cite{lada2005symmetric, oudom2008lie}, we may also introduce an element $\mk_2\langle \mk_1\rangle$ in $\tilde C^\bullet (B, B')$ defined by
\begin{equation}
	\label{brace_symmetric_initial_eq}
	(\mk_2\langle \mk_1\rangle )_{\ell} (y_{[\ell]}) = \sum (-1)^* \ (\mk_2)_{|K_2|} \big( (\mk_1)_{|K_1|} (y_{K_1})\wedge y_{K_2} \big)
\end{equation}
Using these notations, an $L_\infty$ algebra on $B$ refers to an element $\mk$ in $\tilde C^\bullet (B, B)$ with the equation $
\ml\langle \ml\rangle =0$.
Meanwhile, an $L_\infty$ homomorphism $\mk$ from $\ml$ to $\ml'$ refers to the equation 
$\ml' \cdot \mk = \mk\langle \mk\rangle$.

\section{Open-closed brace operations and closed string actions}
\label{s_open_closed_brace}

Fix differential graded vector spaces $(A, d_A)$, $(A', d_{A'})$, and $(B, d_B)$. Define $C^{\ell, k} (B ; A, A')$ to be the space of maps $\varphi: B^{\wedge \ell}\otimes A^{\otimes k} \to A'$.
Define 
\begin{equation}
\label{hochschild_cochain_open_closed_eq}
C^{\bullet,\bullet}(B; A, A')= \prod_{\substack{\ell,k \geqslant 0 \\ (\ell,k)\neq (0,0)}} 
C^{\ell, k} (B ; A, A')
\end{equation}
Concerning the shifted degrees (\ref{shifted_degree_original_eq}), we provide further clarification.
Slightly abusing the notations, we denote the shifted degrees of $a\in A$, $a'\in A'$, and $b\in B$ by $|a|=\deg(a)-\jmath_A$, $|a'|=\deg (a') -\jmath_{A'}$, and $|b|=\deg (b)-\jmath_B$, where $\jmath_A,\jmath_{A'},\jmath_B\in\mathbb Z$ are some fixed numbers.
Recall that the usual degree $\deg(\varphi)$ of $\varphi$ is characterized by
\[
\textstyle
\deg (\varphi(b_1,\dots, b_\ell; a_1,\dots, a_k)) = \deg (\varphi) + \sum_i \deg (b_i) +\sum_i \deg (a_i)
\]
Similar to (\ref{shifted_degree_original_eq}), we define the shifted degree of $\varphi$ by
\[
|\varphi| := \deg(\varphi) + \ell \cdot \jmath_B +k\cdot \jmath_A -\jmath_{A'}
\]
This ensures the following holds
\begin{equation}
	\label{degree_open_closed_eq}
\textstyle
|\varphi(b_1,\dots, b_\ell; a_1,\dots, a_k)| = |\varphi| + \sum_i |b_i| +\sum_i |a_i|
\end{equation}
Unless otherwise stated, we always use the shifted degrees, which is convenient for dealing with $A_\infty$, $L_\infty$, and open-closed homotopy algebra structures (cf. \cite{ocha_kajiura_cmp}). Note that in the context of \cite{FOOOBookOne}, we have $\jmath_B = 2$ and $\jmath_A = \jmath_{A’} = 1$, while different sign conventions should exist.
We apologize if our sign convention is nonstandard.

\subsection{Actions from closed strings}
For $A_\infty$ algebras, the brace operation allows us to succinctly express the $A_\infty$ algebra $\m$ on a graded vector space $A$ as $\m\{\m\} = 0$.
Thus, it is natural to introduce open-closed string analogs of brace operations, which also simplify the expression of OCHAs. 
However, the main difference is the inclusion of extra closed-string components, and we need to introduce additional operations as follows.

An arbitrary element $\ml=\{\ml_\ell : \ell \geqslant 1\}$ in $\tilde C^\bullet (B,B)$ gives rise to a map
\begin{equation}
\label{widehat_ml_action_eq}
\widehat{\ml}: C^{\bullet,\bullet}(B; A, A') \to C^{\bullet,\bullet}(B; A, A')
\end{equation}
defined as follows: for $D\in C^{\ell,k}(B;A,A')$, $y_{[\ell]}=y_1\wedge \cdots \wedge y_\ell\in B^{\wedge \ell}$, and $x_{[k]}=x_1\otimes \cdots \otimes x_k \in A^{\otimes k}$, we have
\[
\widehat{\ml}(D)_{\ell,k} (y_{[\ell]}; x_{[k]}) = \sum (-1)^\epsilon \  D_{|J_2|+1,k} (\ml_{|J_1|} (y_{J_1})\wedge y_{J_2}; x_{[k]})
\]
Here the summation is taken over all the partitions $[\ell]=J_1\sqcup J_2$ and the sign $\epsilon$ is given by $y_{[\ell]}=(-1)^\epsilon y_{J_1}\wedge y_{J_2}$.

\begin{prop}
	\label{widetriangle_compose_L_infinity_prop}
	If $\ml$ is an $L_\infty$ algebra, then $\widehat{\ml} \ \ \widehat{\ml}=0$.
\end{prop}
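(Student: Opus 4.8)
The plan is to prove $\widehat{\ml}\,\widehat{\ml} = 0$ by unwinding the definition of $\widehat{\ml}$ and reorganizing the resulting double sum so that it matches the $L_\infty$ relation $\ml\langle\ml\rangle = 0$ applied to the closed-string inputs. Since $\widehat{\ml}$ acts trivially on the open-string slot $x_{[k]}$ — every summand keeps the full tuple $x_{[k]}$ intact and only reshuffles the $B$-inputs and changes the arity in the $\ell$-direction — the whole computation takes place in the $B^{\wedge\bullet}$ direction, and the operator $D\mapsto \widehat{\ml}(D)$ is, slot by slot in $k$, essentially the operator denoted $\ml\langle -\rangle$ from (\ref{brace_symmetric_initial_eq}) acting on the family $\{D_{\ell,k}\}_\ell$ regarded as an element of $\tilde C^\bullet(B, -)$. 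So the cleanest route is: first observe this identification, and then invoke (or re-derive) the standard fact that $\ml\langle\ml\langle D\rangle\rangle = (\ml\langle\ml\rangle)\langle D\rangle$ up to the relevant signs, which vanishes when $\ml\langle\ml\rangle = 0$.

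Concretely, I would first write out $\widehat{\ml}\,\widehat{\ml}(D)_{\ell,k}(y_{[\ell]}; x_{[k]})$ as a double sum over nested partitions: an outer partition $[\ell] = J_1 \sqcup J_2$ with sign $\epsilon$ from $y_{[\ell]} = (-1)^\epsilon y_{J_1}\wedge y_{J_2}$, contributing $\ml_{|J_1|}(y_{J_1})$ into the first $B$-slot of $\widehat{\ml}(D)_{|J_2|+1,k}$; then applying the definition again to $\widehat{\ml}(D)_{|J_2|+1,k}$ on the argument $\ml_{|J_1|}(y_{J_1})\wedge y_{J_2}$, which produces an inner partition of the index set $\{0\}\sqcup J_2$ (where $0$ labels the slot holding $\ml_{|J_1|}(y_{J_1})$) into two pieces. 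The usual dichotomy applies: either the inner partition's first block does \emph{not} contain the slot $0$, in which case the term has the shape $D(\cdots;\cdots)$ with two \emph{separate} applications of $\ml$ to disjoint subsets of $y_{[\ell]}$ landing in two distinct $B$-slots of $D$; or the inner first block \emph{does} contain slot $0$, giving a nested $\ml_{\bullet}(\ml_{|J_1|}(y_{J_1})\wedge y_{\bullet})$ landing in a single $B$-slot of $D$. These are exactly the two families of terms that appear in the $L_\infty$ relation $\ml\langle\ml\rangle = 0$ (the "composite" term and the "bracket-of-brackets" term), and after a Koszul-sign bookkeeping one sees the total is $\sum D_{\ell'+1,k}\big((\ml\langle\ml\rangle)_{\bullet}(\cdots)\wedge\cdots; x_{[k]}\big) = 0$.

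The main obstacle, as always in these $L_\infty$ computations, will be the sign verification: tracking the Koszul sign $\epsilon$ through the two nested unshuffles and confirming that the "disjoint-blocks" terms cancel in pairs (the term with $\ml$ applied to $J_1$ first versus to $J_1'$ first, swapped) while the "nested" terms assemble into $\ml\langle\ml\rangle$ with the correct sign. The symmetry of the $B$-slots of $D$ — i.e. that $D_{\ell,k}$ factors through $B^{\wedge\ell}$ — is what makes the two $B$-slots interchangeable and hence makes the disjoint-blocks cancellation work; I would flag at the outset that this graded-symmetry is used essentially here, and it is the reason the closed-string slot must carry the wedge structure. A clean way to package the sign argument is to phrase everything in terms of the composition and $\langle\,\rangle$ notation already set up before Proposition~\ref{widetriangle_compose_L_infinity_prop}, reducing the claim to the identity $(\mk_2\langle\mk_1\rangle)\langle\mk_3\rangle$-type associativity from Lada--Markl / Oudom--Guin \cite{lada2005symmetric, oudom2008lie}, applied with the open-string slot as a passive parameter; then $\widehat{\ml}\,\widehat{\ml} = \widehat{\ml\langle\ml\rangle} = \widehat{0} = 0$ follows formally, and the only thing to check by hand is that $\widehat{(-)}$ is compatible with $\langle\,\rangle$ in this parametrized sense.
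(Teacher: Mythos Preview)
Your direct approach---unfold $\widehat{\ml}\,\widehat{\ml}$, split into nested-$\ml(\ml(\cdots)\wedge\cdots)$ terms and disjoint $\ml(\cdots)\wedge\ml(\cdots)$ terms, kill the former by the $L_\infty$ relation and the latter by pairwise swap---is exactly the paper's proof, including the observation that the open-string slot $x_{[k]}$ is passive throughout. One small caveat on your ``clean'' repackaging: the identity $\widehat{\ml}\,\widehat{\ml} = \widehat{\ml\langle\ml\rangle}$ is not literally true; the symmetric brace relation gives $(D\langle\ml\rangle)\langle\ml\rangle = D\langle\ml\langle\ml\rangle\rangle + D\langle\ml,\ml\rangle$, and it is precisely the extra double-insertion term $D\langle\ml,\ml\rangle$ that carries the disjoint-blocks contributions you already identified as needing the separate $|\ml|=1$ cancellation argument---so the formal route does not bypass that step.
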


\begin{proof}
	Based on the definiton of an $L_\infty$ algebras reviewed in \S \ref{ss_L_infinity_alg}, the proof should be not hard and is derived from direct computations, particularly for those familiar with the coalgebra description of an $L_\infty$ algebra. We only give a sketch for completeness.
	
	For clarity, we will omit the subscript integers in what follows.
	Let's use the definition (\ref{widehat_ml_action_eq}) twice to compute
	\begin{align*}
	\widehat{\ml} (\widehat{\ml}(D)) (y_{[\ell]}; x_{[k]})
	&
	=
	\sum (-1)^{a_0} \ \widehat{\ml}(D) (\ml (y_{J_1}) \wedge y_{J_1^c}; x_{[k]} ) \\
	&
	=
	\sum (-1)^{a_1} \  D ( \ml( \ml(y_{J_1})\wedge y_{J_2}) \wedge y_{J_3} ; x_{[k]}) \ + \ \sum (-1)^{a_2} \ D (\ml (y_{J_2}) \wedge \ml(y_{J_1})\wedge y_{J_3} ; x_{[k]} )
	\end{align*}
where the summations are taken over $[\ell]=J_1\sqcup J_1^c = J_1\sqcup J_2\sqcup J_3$ and the signs are characterized by
\begin{align*}
y_{[\ell]} = (-1)^{a_0} \ y_{J_1}\wedge y_{J_1^c} &= (-1)^{a_1} \ y_{J_1}\wedge y_{J_2}\wedge y_{J_3} \\
 (-1)^{a_0} \ \ml(y_{J_1})\wedge y_{J_1^c} &= (-1)^{a_2} \ y_{J_2}\wedge \ml(y_{J_1})\wedge y_{J_3} = (-1)^{a_2+(|\ml|+|y_{J_1}|)|y_{J_2}|} \ml(y_{J_1})\wedge y_{J_2}\wedge y_{J_3}
\end{align*}
As $|\ml|=1$, we see that $a_1=a_2+ (1+|y_{J_1}|)|y_{J_2}|$.

Now, we always fix the $J_3\subseteq [\ell]$. Then, we introduce $\epsilon_0$ to be the sign with $y_{[\ell]}=(-1)^{\epsilon_0}y_{J_3^c} \wedge y_{J_3}$. Also, we introduce $\epsilon=\epsilon(J_1, J_2)$ to be the sign with $y_{J_3^c}=(-1)^\epsilon \ y_{J_1}\wedge y_{J_2}$.
Given that the $L_\infty$ structure implies $\sum (-1)^\epsilon \ \ml (\ml(y_{J_1}) \wedge y_{J_2}) = 0$ and the fact that $a_1 - \epsilon = \epsilon_0$ depends only on $J_3$, we can conclude that the first sum vanishes. To establish that the second sum also vanishes, it suffices to demonstrate that for a fixed $J_3$, we have
$
S=\sum_{J_1\sqcup J_2=J_3^c} (-1)^{\epsilon(J_1,J_2)+ (1+|y_{J_1}|)|y_{J_2}|} \ D(\ml(y_{J_2})\wedge \ml(y_{J_1}) \wedge y_{J_3}; x_{[k]}) =0
$.
In fact, we consider
\begin{align*}
2S &=  \sum_{J_1\sqcup J_2=J_3^c} (-1)^{\epsilon(J_1,J_2)+ (1+|y_{J_1}|)|y_{J_2}|} \ D(\ml(y_{J_2})\wedge \ml(y_{J_1}) \wedge y_{J_3}; x_{[k]}) \\
& + \sum_{J_1\sqcup J_2=J_3^c} (-1)^{\epsilon(J_2,J_1)+ (1+|y_{J_2}|)|y_{J_1}|} \ D(\ml(y_{J_1})\wedge \ml(y_{J_2}) \wedge y_{J_3}; x_{[k]})
\end{align*}
where we use an exchange of $J_1$ and $J_2$.
Since $\ml(y_{J_2})\wedge \ml(y_{J_1})= (-1)^{(1+|y_{J_1}|)(1+|y_{J_2}|)} \ \ml(y_{J_1})\wedge \ml(y_{J_2})$, it suffices to show that in $\mathbb Z/2\mathbb Z$, we have
\[
\Big(\epsilon(J_1,J_2) +(1+|y_{J_1}|)|y_{J_2}| + (1+|y_{J_1}|)(1+|y_{J_2}|) \Big) + \Big( \epsilon(J_2,J_1) = (1+|y_{J_2}|)|y_{J_1}| \Big) =0
\]
This is equivalent to $\epsilon(J_1,J_2)+\epsilon(J_2,J_1)+|y_{J_1}||y_{J_2}|+1=0$, which is clear.
\end{proof}

\subsection{Higher brace operations}

Define the {\textit{Gerstenhaber product}} $D\{E\}$ of $D\in C^{\bullet,\bullet} (B; A, A')$ and $E\in C^{\bullet,\bullet} (B; A, A')$ as an element in $C^{\bullet,\bullet} (B; A, A')$ whose components are described as follows: 
\begin{equation}
	\label{Gerstenhaber_prod_eq}
	(D\{E\})_{\ell,k} (y_{[\ell]}; x_{[k]}) =
	\sum
	(-1)^\ast  \ D_{|L_1|, |K_1|+|K_3|+1} (y_{L_1} ; x_{K_1} , E_{|L_2|, |K_2|} (y_{L_2} ; x_{K_2}), x_{K_3} )
\end{equation}
Here the summation is taken over
$[\ell]=L_1\sqcup L_2$, $[k]=K_1\dot\sqcup K_2\dot\sqcup K_3$; the sign is 
$\ast =|x_{K_1}|\cdot |y_{L_2}|+ \big(|y_{L_1}|+|x_{K_1}|\big) |D|
+\epsilon$ with $\epsilon$ defined by $y_{L_1}\wedge y_{L_2}= (-1)^{\epsilon} y_{[\ell]}$.

More generally, for $D, E_1,\dots, E_m\in C^{\bullet,\bullet}(B; A, A')$, we define a new element $D\{E_1,\dots, E_m\}$ in $C^{\bullet,\bullet}(B; A, A')$ by the following formula: for $y_{[\ell]}=y_1\wedge \cdots \wedge y_\ell$ and $x_{[k]}=x_1\otimes \cdots \otimes x_k$, we have
\begin{align*}
& D\{E_1,\dots, E_m\} (y_{[\ell]}; x_{[k]})  \\
&=
\sum (-1)^\ast D \big( y_J; x_{I_0}, E_1 (y_{L_1}; x_{K_1}), x_{I_1},\dots, x_{I_{m-1}}, E_m(y_{L_m}; x_{K_m}) , x_{I_m}  \big) 
\end{align*}
where the summation is taken over the (dotted) partitions (cf. (\ref{dot_sqcup_notation_eq}))
\begin{align*}
&  [\ell] = J\sqcup L_1 \sqcup \cdots \sqcup L_m  \ , \  && [k]=I_0\dot\sqcup K_1 \dot \sqcup I_1 \dot\sqcup \cdots \dot\sqcup I_{m-1}\dot\sqcup K_m \dot\sqcup I_m 
\end{align*}
and the sign is
\begin{align*}
	\ast 
	&=
	\sum_{j=1}^m \big(\sum_{i=0}^{j-1} |x_{I_i}|  +  \sum_{i=1}^{j-1} |x_{K_i}| \big) \big(|E_j|+|y_{L_j}| \big)  +
	\sum_{j=1}^m  \big( |y_J|+\sum_{i=1}^{j-1}|y_{L_i}|\big) |E_j| 
	\ \   +\epsilon
\end{align*}
with $\epsilon$ determined by
\[
y_{[\ell]}= (-1)^\epsilon \ y_J\wedge y_{L_1}\wedge \cdots \wedge y_{L_m} 
\]
Notice that the degenerating situation $m=0$ is allowed, and then we set
$D\{\}=D$.
We can also use (\ref{degree_open_closed_eq}) to verify that for the shifted degrees, 
\begin{equation}
	\label{brace_sign_eq}
|D\{E_1,\dots, E_m\}| = |D| + |E_1|+\cdots +|E_m|
\end{equation}

\begin{lem}[Brace Relation]
	\label{brace_brace_lem}
	We have the following equation
\begin{align*}
&D\{E_1,\dots, E_m\} \{F_1,\dots, F_n\} \\
&=
\sum_{i_1\leqslant j_1\leqslant \cdots \leqslant i_m\leqslant j_m} (-1)^\ast \ D \Big\{F_1,\dots, F_{i_1}, E_1\{F_{i_1+1}, \dots, F_{j_1}\}, F_{j_1+1},\dots, F_{i_2}, E_2\{F_{i_2+1},\dots, F_{j_2}\}, \dots, \\
& \qquad \qquad \qquad  , F_{i_m}, E_m\{ F_{i_m+1}, \dots, F_{j_m}\} , F_{j_m+1},\dots, F_n \Big \}
\end{align*}
where the sign is
$
\ast = \sum_{k=1}^m |E_k| \sum_{a=1}^{i_k} |F_a|
$.
\end{lem}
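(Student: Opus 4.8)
The plan is to expand both sides of the claimed identity directly from the definition of the higher brace operations and show that the resulting sums over partitions coincide term by term, with matching signs. First I would unwind the left-hand side $D\{E_1,\dots,E_m\}\{F_1,\dots,F_n\}$ in two stages: the outer brace inserts the $m$ blocks $E_\bullet$ into slots of $D$, producing a big expression built from $D$ evaluated on arguments $x_{I_0}, E_1(\dots), x_{I_1},\dots,E_m(\dots),x_{I_m}$ (and similarly the $B$-inputs split as $J\sqcup L_1\sqcup\cdots\sqcup L_m$); then the inner brace $\{F_1,\dots,F_n\}$ inserts the $n$ blocks $F_\bullet$, in their given order, into the slots of this composite. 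The key combinatorial observation is that each $F_a$ lands either (i) directly into one of the $A$-slots of $D$ (i.e. into some $x_{I_i}$-region, or more precisely into the refined $A$-input stream of $D$ that interleaves the $x_{I_i}$'s and the $E_j(\dots)$ entries), or (ii) \emph{inside} one of the arguments of some $E_j$, i.e. into the $x_{K_j}$-part fed to $E_j$. Because the $F_\bullet$ are inserted in increasing index order and the brace partitions are \emph{dotted} in the $A$-variables, the set of $F$'s that fall inside a given $E_j$ is automatically a consecutive block $F_{i_j+1},\dots,F_{j_j}$, and these blocks appear in order $i_1\leqslant j_1\leqslant i_2\leqslant\cdots\leqslant j_m$. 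Collecting, for each $j$, the $F$'s of type (ii) assigned to $E_j$ reconstitutes exactly an inner brace $E_j\{F_{i_j+1},\dots,F_{j_j}\}$ (here one must also check the $B$-inputs: the $L_j$ together with the $B$-inputs of those $F$'s reassemble correctly, which is the content of how $\widehat{\ml}$-free braces compose on the symmetric side), while the $F$'s of type (i) remain as outer insertions. This is precisely the right-hand side.

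The second, and more delicate, step is the sign bookkeeping. I would introduce a single global sign associated to a fully-specified decomposition — a choice of how $[\ell]$ and $[k]$ are distributed among $D$, all the $E_j$, and all the $F_a$, together with a choice of which $F_a$ sits inside which $E_j$ — and compute this sign in two ways: once by applying formula (\ref{brace_sign_eq})-style expansion along the left-hand order of operations, and once along the right-hand order. The raw Koszul signs (the $\epsilon$'s governing the reordering of the $y$-wedges, and the degree-crossing signs $\sum(\cdots)|x|\,(|E|+|y_L|)$ etc.) are, by design of the shifted-degree conventions in (\ref{degree_open_closed_eq}), associative: rearranging a tensor/wedge word and then rearranging again gives the same total sign as rearranging once, so the "physical" part of the sign automatically agrees. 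What remains is a purely index-combinatorial discrepancy, and it should collapse to exactly the stated correction $\ast=\sum_{k=1}^m|E_k|\sum_{a=1}^{i_k}|F_a|$: this term records that when we perform the inner brace first (right-hand side), each block $E_k$ must "pass" the blocks $F_1,\dots,F_{i_k}$ that were inserted to its left on the outer level, whereas on the left-hand side the $E_k$ were inserted before any $F$'s, so no such crossing was paid. I expect the cleanest way to verify this is to track a single $F_a$ and a single $E_k$ with $a\leqslant i_k$ and confirm their relative Koszul crossing contributes $|F_a|\,|E_k|$ with the correct total sign, then sum.

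The main obstacle I anticipate is not the conceptual structure — which is the standard Getzler/Gerstenhaber brace-relation argument — but the precise handling of the \emph{interleaving of the $A$-slots}: in the open-closed setting, after the outer brace the $A$-input word of $D$ is no longer a single block but a word alternating $x_{I_i}$-segments and $E_j$-entries, and a subsequent $F_a$ may be inserted at a position that is "between" an $x_{I_i}$ and an $E_j$, or "inside" an $x_{K_j}$. One must set up notation carefully (refining the dotted partitions of $[k]$ compatibly on both sides) so that the bijection between left- and right-hand summands is literally a bijection and not merely a heuristic. A secondary subtlety is that the $B$-inputs are governed by genuine (non-dotted) partitions with Koszul signs, so reassembling the $L$'s across the two orders of operation requires invoking the symmetric-composition formulas from \S\ref{ss_L_infinity_alg}; fortunately the operator $\widehat{\ml}$ plays no role here (the braces $D\{E_1,\dots,E_m\}$ do not involve $\ml$), so this part reduces to the Koszul-sign associativity already mentioned. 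Once the bijection and the sign reconciliation are in place, the lemma follows by comparing summands.
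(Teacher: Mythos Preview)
Your proposal is correct and follows essentially the same approach as the paper: a direct two-stage expansion of the left-hand side, a combinatorial identification of summands with those on the right (splitting the $F_a$'s into those landing in a slot of $D$ versus inside some $E_j$), and then a Koszul sign reconciliation yielding the correction $\ast$. The paper carries this out explicitly only in the case $m=n=1$ (tracking the three summands $S_0,S_1,S_2$ and verifying the signs by hand), declaring the general case similar; your outline is pitched at the general $m,n$ but is otherwise the same argument.
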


\begin{proof}
The proof is parallel to the usual brace relations on $C^\bullet (A, A)$ (see e.g. \cite{getzler1994operads,tamarkin2005ring}), with the primary distinction being that the closed-string inputs introduce additional layers of computations.
Let's only consider the case $m = n = 1$. The general cases are similar.
Our goal is to prove:
\begin{equation}
\label{D_E_F_eq}
\begin{aligned}
	D\{E\} \{F\} &= D\{ E\{F\}\} +  D\{ E\{\}, F\} + (-1)^{|E||F|} \ D\{ F, E\{\} \} \\
			&=D\{ E\{F\}\} +  D\{ E , F\} + (-1)^{|E||F|} \ D\{ F, E \}
\end{aligned}
\end{equation}
We compute
\begin{align*}
D\{E\}\{F\} (y_{[\ell]}; x_{[k]})
		&
		=
		\sum (-1)^{\delta} \ 
		D\{E\} (y_{P\sqcup Q}; x_A, F(y_R; x_B), x_C ) \\
		&
		=
		\sum (-1)^{\delta_0} \ 
		D \big(y_P; x_{A'}, E(y_Q; x_{A''}, F(y_R; x_B), x_{C'}), x_{C''} \big) \\
		&
		+
		\sum (-1)^{\delta_1} \ 
		D\big(y_P; x_{A_1}, E(y_Q; x_{A_2}), x_{A_3}, F(y_R; x_B), x_C\big)
		\\
		&
		+
		\sum (-1)^{\delta_2} \ 
		D(y_P; x_A, F(y_R; x_B), x_{C_1}, E(y_Q; x_{C_2}), x_{C_3})
		\quad
		=: S_0+S_1+S_2
	\end{align*}
	where the partitions are $[\ell] = P\sqcup Q\sqcup R$ and
	\begin{align*}
		& [k]
		=A\dot\sqcup B\dot\sqcup C
		= A' \dot\sqcup A'' \dot\sqcup B \dot\sqcup C' \dot\sqcup C''
		= A_1\dot\sqcup A_2 \dot\sqcup A_3 \dot\sqcup B \dot\sqcup C  = A \dot\sqcup B \dot\sqcup C_1 \dot\sqcup C_2 \dot\sqcup C_3
	\end{align*}
	Here the signs are given by
	\begin{align*}
		\delta 
		&=|x_A|(|y_R|+|F|) + |y_{P\sqcup Q}||F|
		+\epsilon \\
		\delta_0
		&=\delta 
		+
		|x_{A'}|(|E|+|y_Q|)+|y_P||E|
		+  \epsilon' \\
		\delta_1
		&=\delta +
		|x_{A_1}|(|E|+|y_Q|)+|y_P||E|
		+ \epsilon'  
		\\
		\delta_2
		&=\delta + 
		(|x_A|+|F(y_R; x_B)|+|x_{C_1}|)(|E|+|y_Q|) + |y_P||E|
		+ \epsilon'
	\end{align*}
	with $\epsilon, \epsilon'$ given by 
	\begin{align*}
		y_{[\ell]}=(-1)^{\epsilon} y_{P\sqcup Q} \wedge y_R \\
		y_{P\sqcup Q}=(-1)^{\epsilon'} y_P\wedge y_Q
	\end{align*}
We claim that  $S_0, S_1, S_2$ correspond to the first, second, and third terms, respectively, on the right-hand side of (\ref{D_E_F_eq}).
Here we give a detailed computation for $S_2$, and the cases for $S_0$ and $S_1$ are similar:
\begin{align*}
S_2 &= \sum (-1)^{\lambda_2} D\{F, E\} (y_{[\ell]}; x_{[k]})
\end{align*}
where the sign is
\begin{align*}
\lambda_2 &= \delta_2 +  |x_A|(|F|+|y_R|) + (|x_A|+|x_B|+|x_{C_1}|)(|E|+|y_Q|) + |y_P||F|+|y_{P\sqcup R}||E| + \epsilon_2 
\end{align*}
with $\epsilon_2$ decided by
\[
y_{[\ell]} = (-1)^{\epsilon_2}  y_P \wedge y_R \wedge y_Q
\]
Putting things together, we obtain $\lambda_2= |F||E|+|y_Q||y_R|+\epsilon+\epsilon'+\epsilon_2$.
Then, we just need to observe that in $\mathbb Z/2\mathbb Z$, we actually have
$
\epsilon+\epsilon' +\epsilon_2 = |y_Q||y_R|
$ as desired.
\end{proof}

\subsection{A key lemma}
The following lemma is a key new result that describes how the action of $\tilde{C}^\bullet(B,B)$ on $C^{\bullet,\bullet}(B; A, A')$ interact with the above open-closed brace operations.

\begin{lem}
	\label{brace_with_closed_string_action_lem}
	One has
	\begin{align*}
	& \widehat{\ml}\big(D\{E_1, \dots, E_m \} \big)  \\
	&
	=
	(-1)^{|\ml|\sum_{i=1}^m|E_i|} \ \widehat{\ml}(D) \{E_1,\dots, E_m\} 
	+
	\sum_{i=1}^m (-1)^{|\ml|\sum_{j=i+1}^m|E_j|}D\{E_1,\dots, E_{i-1}, \widehat{\ml}(E_i), E_{i+1},\dots, E_m\}
	\end{align*}
\end{lem}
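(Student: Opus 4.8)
The strategy is a direct expansion of both sides followed by a bijective matching of terms, exactly as in the proof of the Brace Relation (Lemma \ref{brace_brace_lem}), with the extra bookkeeping coming from the fact that $\widehat{\ml}$ only touches closed-string inputs. First I would unwind the left-hand side: applying the definition \eqref{widehat_ml_action_eq} of $\widehat{\ml}$ to $D\{E_1,\dots,E_m\}$ introduces a partition $[\ell] = J_1 \sqcup J_2$ together with the factor $\ml_{|J_1|}(y_{J_1})$, which then gets fed as one of the closed-string inputs into $D\{E_1,\dots,E_m\}$ evaluated on $y_{J_2}$ and $x_{[k]}$. Next I expand the inner brace using its definition: the closed-string inputs (now the single block $\ml_{|J_1|}(y_{J_1})$ wedged with $y_{J_2}$) get distributed among $D$ and the $E_j$'s. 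The key combinatorial observation is that the block $\ml_{|J_1|}(y_{J_1})$ lands in exactly one of these $m+1$ slots: either it goes into $D$'s own closed-string argument, or it goes into the closed-string argument of some $E_i$. These two cases produce precisely the first term $\widehat{\ml}(D)\{E_1,\dots,E_m\}$ and the $i$-th summand $D\{\dots,\widehat{\ml}(E_i),\dots\}$ respectively, once the signs are reconciled.

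The main work is therefore the sign computation. In the first case, the factor $\ml_{|J_1|}(y_{J_1})$ must be moved past all the $y$-inputs that end up in the $E_j$-slots (i.e.\ past $y_{L_1}\wedge\cdots\wedge y_{L_m}$) in order to be recognized as an application of $\widehat{\ml}$ to $D$; by \eqref{degree_open_closed_eq} this costs a Koszul sign of $|\ml|\bigl(|y_{L_1}|+\cdots+|y_{L_m}|\bigr)$, and since $|\ml| = 1$ one checks this accumulates, term by term, to exactly $(-1)^{|\ml|\sum_i |E_i|}$ after using the degree identity $|E_i| = |\ml|\cdot|\text{(closed-string count of }E_i)| + \dots$ — more precisely, the exponent is forced by comparing the $\epsilon$-sign in \eqref{widehat_ml_action_eq} with the $\epsilon$-sign appearing in the definition of the brace, and one uses \eqref{brace_sign_eq}. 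In the second case, when $\ml_{|J_1|}(y_{J_1})$ is routed into the $i$-th slot, it must be commuted past the inputs destined for $E_{i+1},\dots,E_m$ only (everything to its left in the wedge ordering of the inner expression stays put), yielding the claimed sign $(-1)^{|\ml|\sum_{j=i+1}^m |E_j|}$; here one also has to verify that forming $\widehat{\ml}(E_i)$ internally reproduces exactly the right sub-sum, which is immediate from the definition of $\widehat{\ml}$ applied to $E_i$ on the inputs $(y_{J_1}\sqcup y_{L_i}; x_{K_i})$.

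Concretely, I would organize the proof as follows. Step 1: write out $\widehat{\ml}(D\{E_1,\dots,E_m\})(y_{[\ell]};x_{[k]})$ as a sum over $[\ell] = J_1 \sqcup J_2$ and then over the induced partition of $J_2$ (and the dotted partition of $[k]$) coming from the brace. Step 2: split this sum according to which of the $m+1$ arguments receives $\ml_{|J_1|}(y_{J_1})$; relabel the partition data so that, in the ``$D$-slot'' case, $J_1 \sqcup J_2$ refines to $(J_1 \sqcup J) \sqcup L_1 \sqcup \cdots \sqcup L_m$, and in the ``$E_i$-slot'' case it refines so that $J_1$ is absorbed into $L_i$. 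Step 3: in the ``$D$-slot'' case, recognize the inner sum over $J_1$ (with $J_1 \sqcup J$ fixed) as $\widehat{\ml}(D)$ evaluated on the appropriate arguments, and carefully track the sign transposition as described above. Step 4: in the ``$E_i$-slot'' case, recognize the inner sum over $J_1$ (with $J_1 \sqcup L_i$ fixed) as $\widehat{\ml}(E_i)$, again tracking signs. Step 5: assemble and compare with the right-hand side. The main obstacle is purely the sign verification in Step 3 and Step 4 — in particular checking that the local Koszul signs from transposing $\ml_{|J_1|}(y_{J_1})$ across the $y_{L_j}$-blocks, combined with the reshuffling signs $\epsilon$ relating the various orderings of $y_J, y_{L_1},\dots,y_{L_m}, y_{J_1}$, collapse to the global exponents $|\ml|\sum_{i}|E_i|$ and $|\ml|\sum_{j>i}|E_j|$; as in Lemma \ref{brace_brace_lem}, this reduces to an identity in $\mathbb{Z}/2\mathbb{Z}$ among Koszul signs that, while tedious, presents no conceptual difficulty since $|\ml|=1$ makes all the degree-dependent cross-terms manageable.
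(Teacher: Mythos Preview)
Your approach is essentially identical to the paper's: expand $\widehat{\ml}(D\{E_1,\dots,E_m\})$, split the resulting sum according to which of the $m+1$ slots receives the block $\ml(y_{J_1})$, and then recognize each piece as one of the terms on the right-hand side; the paper carries this out in full detail for $m=2$, tracking exactly the chain of signs $\eta_i \to \zeta_i \to \theta_i$ you outline in Steps~3--4.

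One small caution on your sign heuristic: the global exponent $|\ml|\sum_i|E_i|$ in the $D$-slot case does \emph{not} arise from commuting $\ml(y_{J_1})$ past the $y_{L_j}$-blocks (that Koszul cost would be $(|\ml|+|y_{J_1}|)\sum_j|y_{L_j}|$, which is not constant). Rather, it comes from the brace's own sign formula: the term $|y_J|\cdot\sum_j|E_j|$ picks up an extra $|\ml|\cdot\sum_j|E_j|$ because the $D$-slot's closed-string argument now carries the additional degree $|\ml|$; the reshuffling $\epsilon$-signs on the two sides cancel exactly (and similarly, in the $E_i$-slot case the extra $|\ml|$ in $|y_{L_i}|$ hits only the $|E_j|$ with $j>i$, while the degree shift $|\widehat{\ml}(E_i)|=|\ml|+|E_i|$ on the right absorbs the remaining cross-terms). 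Since you plan to verify the signs by direct comparison anyway, this misattribution will not cause trouble, but it is worth getting the mechanism straight before grinding through the $\mathbb Z/2\mathbb Z$ identities.
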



\begin{proof}
For simplicity, let's only address the case $m=2$. The general cases are similar.
Given that this lemma appears to be relatively new, we provide the full details below, with particular attention to clarifying the sign computations.

Our goal is to prove
\[
\widehat{\ml}\big(D\{E_1, E_2\} \big) = (-1)^{|\ml|(|E_1|+|E_2|) } \ \widehat{\ml}(D) \{E_1, E_2\} + (-1)^{|\ml||E_2|} \ D\{\widehat{\ml}(E_1), E_2\} + D\{E_1, \widehat{\ml}(E_2)\} 
\]
First, we compute
	\begin{align*}
		& \widehat{\ml}(D\{E_1, E_2\}) (y_{[\ell]}; x_{[k]}) 
		=
		\sum (-1)^{\epsilon} \  D\{E_1, E_2\}( \ml (y_{J}) \wedge y_{J^c}  ; x_{[k]}) \\
		&
		=
		\sum (-1)^{\eta_0} \  D (
		\ml(y_{J})\wedge y_{J'}
		; 
		x_{I_0}, E_1(y_{L_1}; x_{K_1}), x_{I_1}, E_2(y_{L_2}; x_{K_2}), x_{I_2}
		) \\
		&
		+
		\sum (-1)^{\eta_1} \ D (
		y_{J'}; x_{I_0}, E_1(\ml(y_{J})\wedge y_{L_1}; x_{K_1}) ,  x_{I_1}, E_2(y_{L_2}; x_{K_2}), x_{I_2}
		) \\
		&
		+
		\sum (-1)^{\eta_2} \ D (
		y_{J'} ; x_{I_0}, E_1( y_{L_1}; x_{K_1}) ,  x_{I_1}, E_2( \ml(y_{J})\wedge y_{L_2}; x_{K_2}), x_{I_2}
		) \\[5pt]
		&
		=
		\sum (-1)^{\zeta_0} \  \widehat{\ml} (D) (
	   y_{J\sqcup J'}
		; 
		x_{I_0}, E_1(y_{L_1}; x_{K_1}), x_{I_1}, E_2(y_{L_2}; x_{K_2}), x_{I_2}
		) \\
		&
		+
		\sum (-1)^{\zeta_1} \ D (
		y_{J'}; x_{I_0}, \widehat \ml(E_1)(y_{J\sqcup L_1}; x_{K_1}) ,  x_{I_1}, E_2(y_{L_2}; x_{K_2}), x_{I_2}
		) \\
		&
		+
		\sum (-1)^{\zeta_2} \ D (
		y_{J'} ; x_{I_0}, E_1( y_{L_1}; x_{K_1}) ,  x_{I_1}, \widehat \ml (E_2) ( y_{J\sqcup L_2}; x_{K_2}), x_{I_2}
		) \\[5pt]
		&
		=
		\sum (-1)^{\theta_0} \  \widehat{\ml} (D) \{E_1, E_2\} (y_{[\ell]}; x_{[k]}) \\
		&
		+
		\sum (-1)^{\theta_1} \ D \{ \widehat{\ml}(E_1), E_2\} (y_{[\ell]}; x_{[k]}) \\
		&
		+
		\sum (-1)^{\theta_2} \ D \{ E_1, \widehat{\ml}(E_2)\} (y_{[\ell]}; x_{[k]})
	\end{align*}
	Here the summations are taken over all partitions
	$[\ell]=J\sqcup J' \sqcup L_1\sqcup L_2$, dotted partitions $[k]=I_0\dot\sqcup K_1\dot\sqcup I_1\dot\sqcup K_2\dot\sqcup I_2$,
	and $\epsilon$ is simply characterized by 
	\[
	y_{[\ell]}=(-1)^{\epsilon} \ y_{J}\wedge y_{J^c}
	\]
	for $J^c=[\ell]-J$.
	Note also that $|\ml(y_J)|=|\ml|+|y_J|$.
	
	Let's describe the other signs as follows
	\begin{align*}
\hspace{-5em}	\eta_0 
	&=
	  \epsilon+ \epsilon_0 + |x_{I_0}|(|E_1|+|y_{L_1}|) && + (|x_{I_0}|+|x_{K_1}|+|x_{I_1}|)(|E_2|+|y_{L_2}|) &&+ (|\ml|+ |y_{J\sqcup J'} |)|E_1|+(|\ml|+|y_{J\sqcup J'\sqcup L_1}|)|E_2| \\
\hspace{-5em}	\eta_1
	 &=
	 \epsilon+ \epsilon_1 + |x_{I_0}|(|E_1|+|\ml|+|y_{J\sqcup L_1}|) &&+ (|x_{I_0}|+|x_{K_1}|+|x_{I_1}|)(|E_2|+|y_{L_2}|) &&+ |y_{J'}||E_1|+(|\ml|+|y_{J\sqcup J'\sqcup L_1}|)|E_2| \\
\hspace{-5em}	\eta_2
	 &=
	 \epsilon+ \epsilon_2 + |x_{I_0}|(|E_1|+|y_{L_1}|) &&+ (|x_{I_0}|+|x_{K_1}|+|x_{I_1}|)(|E_2|+|\ml|+|y_{J\sqcup L_2}|) &&+ |y_{J'}||E_1|+|y_{L_1}||E_2| 
	\end{align*}
where $\epsilon_0,\epsilon_1,\epsilon_2$ are characterized by
\begin{align*}
\ml(y_J)\wedge y_{J^c} &= (-1)^{\epsilon_0} \ml(y_J)\wedge y_{J'} \wedge y_{L_1}\wedge y_{L_2} \\
\ml(y_J)\wedge y_{J^c} &= (-1)^{\epsilon_1} y_{J'} \wedge \ml(y_J)\wedge y_{L_1}\wedge y_{L_2} \\
\ml(y_J)\wedge y_{J^c} &= (-1)^{\epsilon_2} y_{J'} \wedge  y_{L_1}\wedge \ml(y_J)\wedge y_{L_2}
\end{align*}
Remark that the first equation is equivalent to $y_J\wedge y_{J^c} =(-1)^{\epsilon_0} y_J\wedge y_{J'} \wedge y_{L_1}\wedge y_{L_2}$, removing the $\ml$.
This implies that
\[
\epsilon_0 = \epsilon_1 + |y_{J'}|(|\ml|+|y_J|) = \epsilon_2 + (|y_{J'}|+|y_{L_1}|)(|\ml|+|y_J|)
\]
in $\mathbb Z/2\mathbb Z$. Moreover, the definition of $\widehat{\ml}$ suggests that
\begin{align*}
y_{J\sqcup J'} &= (-1)^{\zeta_0-\eta_0} \ y_J\wedge y_{J'} \\
y_{J\sqcup L_1} &= (-1)^{\zeta_1-\eta_1} \ y_J\wedge y_{L_1} \\
y_{J\sqcup L_2} &= (-1)^{\zeta_2-\eta_2} \ y_J\wedge y_{L_2}
\end{align*}
By the definition of the brace operations, we note that
\begin{align*}
\theta_0-\zeta_0 &= |x_{I_0}|(|E_1|+|y_{L_1}|) &&+ (|x_{I_0}|+|x_{K_1}|+|x_{I_1}|) (|E_2|+|y_{L_2}|) &&+ |y_{J\sqcup J'}| |E_1|+ |y_{J\sqcup J'\sqcup L_1}| |E_2| +\varsigma_0 \\
\theta_1-\zeta_1 &= |x_{I_0}|(|\widehat{\ml}(E_1)|+|y_{J\sqcup L_1}|) &&+ (|x_{I_0}|+|x_{K_1}|+|x_{I_1}|) (|E_2|+|y_{L_2}|) &&+ |y_{J'}| |\widehat{\ml}(E_1)|+ |y_{J\sqcup J'\sqcup L_1}| |E_2| + \varsigma_1 \\
\theta_2-\zeta_2 &= |x_{I_0}|(|E_1|+|y_{L_1}|) &&+ (|x_{I_0}|+|x_{K_1}|+|x_{I_1}|) (|\widehat{\ml}(E_2)|+|y_{J\sqcup L_2}|) &&+ |y_{J'}| |E_1|+ |y_{J'\sqcup L_1}| |\widehat{\ml}(E_2)| + \varsigma_2
\end{align*}
where $\varsigma_0, \varsigma_1, \varsigma_2$ are determined by
\begin{align*}
 y_{[\ell]} &= (-1)^{\varsigma_0} \ y_{J\sqcup J'} \wedge y_{L_1}\wedge y_{L_2} \\
 y_{[\ell]} &= (-1)^{\varsigma_1} \ y_{J'} \wedge y_{J\sqcup L_1} \wedge y_{L_2} \\
 y_{[\ell]} &= (-1)^{\varsigma_2} \ y_{J'} \wedge y_{L_1}\wedge y_{J\sqcup L_2}
\end{align*}
Then, in $\mathbb Z/2\mathbb Z$, we can show that
\begin{align*}
 &(\zeta_0-\eta_0)+\epsilon_0+\epsilon +\varsigma_0  \\
 = \ &  (\zeta_1 -\eta_1) +\epsilon_1+\epsilon+ |\ml| |y_{J'}| +\varsigma_1  \\
 = \ &  (\zeta_2 -\eta_2) +\epsilon_2 +\epsilon + |\ml| (|y_{J'}|+|y_{L_1}|) + \varsigma_2 \quad = \ 0
\end{align*}
Putting things together, a tedious but routine computation finally yields that
$\theta_0 = |\ml|(|E_1|+|E_2|)$, $\theta_1 = |\ml| |E_2|$, and
$\theta_2 = 0$ as desired.
\end{proof}

\section{Hochschild cohomology via OCHA}
Let $A$ and $B$ be differential graded vector spaces as before. Recall that in (\ref{hochschild_cochain_open_closed_eq}), the space
\[
C^{\bullet,\bullet}(B; A, A)= \prod_{\substack{\ell,k \geqslant 0 \\ (\ell,k)\neq (0,0)}} 
C^{\ell, k} (B ; A, A)
\]
can be regarded as an open-closed analogue of the Hochschild cochain complex $C^{\bullet}(A, A)=\prod_{k\geqslant 1} C^k(A, A)$ in (\ref{hochschild_cochain_eq}).
Just as the condition $k\geqslant 1$ in $C^{\bullet}(A, A)$ pertains to non-weak (or non-curved) $A_\infty$ algebras, the condition $(\ell,k) \neq (0,0)$ signifies the study of \textit{non-weak} open-closed homotopy algebra (OCHA).
Sometimes one may introduce $C^0(A,A)$ by thinking of the quotient $\bar A=A/ e$ when $A$ is an associative algebra with a unit $e$, see \cite{tamarkin2005ring}.
There should be a similar approach for our situation here. However, in this paper, we do not attempt to explore this circumstance and instead always focus on the setting of non-weak / non-curved OCHAs.

\begin{rmk}
	\label{infinite_sum_issue_rmk}
Developing a theory for \textit{weak} (also called \textit{curved}) OCHAs, with nonzero $(\ell,k) = (0,0)$ terms, is feasible but more subtle. One primary challenge lies in addressing the "\textbf{infinite sum issue}", which appears to be insufficiently addressed in \cite{ocha_kajiura_cmp}.
Let's illustrate this issue by presenting the first few cases of infinite sums for a curved $A_\infty$ homomorphism $\f:\m\to\m'$: (cf. \cite[2.1]{ocha_kajiura_cmp}):
\begin{align*}
	\textstyle \sum_{k=0}^\infty \m'_k (\f_0,\dots, \f_0) &= \f_1 ( \m_0)
	\\
	\textstyle
	\sum_{k_1= 0}^\infty \sum_{k_2= 0}^\infty  \m'_{k_1+k_2+1}(\underbrace{\f_0,\dots, \f_0}_{k_1}, \f_1(x), \underbrace{\f_0,\dots, \f_0}_{k_2})  &= \f_1( \m_1(x)) \pm \f_2( x, \m_0) + \f_2(\m_0, x) \\[-12pt]
	& \cdots\cdots
\end{align*}
This is the reason why we do not allow $k=0$ in $C^\bullet (A,A)=\prod_{k\geqslant 1} C^k(A,A)$, and the \textit{curved} OCHAs face similar troubles.
In fact, the presence of infinite sums on the left-hand side compels us to engage in a discussion on convergence so that the choice of the ground field becomes critical. Sometimes, we cannot simply work over the complex field $\mathbb{C}$ but must instead use a non-archimedean field, such as the Novikov field; cf. \cite{FOOOBookOne}.
This matter is quite critical for geometric applications since the curvature term $\m_0$ (resp. $\f_0$) in a curved $A_\infty$ algebra (resp. a curved $A_\infty$ homomorphism) may have concrete geometric meanings \cite{Yuan_I_FamilyFloer}: Specifically, $\m_0$ is related to the obstruction in Lagrangian Floer theory \cite{FOOOBookOne,Yuan_unobs} and the Landau-Ginzburg mirror \cite{Cho_Oh,FOOOToricOne,Yuan_c_1,Yuan_e.g._FamilyFloer}; meanwhile, $\f_0$ is a crucial element in the wall-crossing phenomenon of Maslov-0 holomorphic disks within the context of SYZ mirror construction \cite{Yuan_local_SYZ,Yuan_conifold,Yuan_A_n}.
We will develop a systematic way to manage various \textit{weak} or \textit{curved} $A_\infty$, $L_\infty$, and OCHA structures somewhere else. This will likely rely on a variant of the stability condition mentioned in the introduction.
\end{rmk}

\subsection{A concise formulation of OCHA}
The various operations in the preceding section facilitate a more concise formulation of the open-closed homotopy algebra (OCHA) as introduced by Kajiura and Stasheff \cite{ocha_kajiura_cmp, ocha_kajiura_survey}.
Fix differential graded vector spaces $(A, d_A)$ and $(B, d_B)$ as before. 

\begin{defn}
	\label{ocha_defn}
	An \textit{OCHA} (\textit{open-closed homotopy algebra}) is a tuple $(B, A, \ml, \q)$, or simply a pair $(\ml,\q)$ if the context is clear, that consists of an $L_\infty$ algebra $(B,\ml)$ with $\ml \in \tilde C^\bullet(B, B)$ (see \S \ref{ss_L_infinity_alg})
	and an element $\q=\{\q_{\ell,k}: \ell,k\geqslant 0, (\ell,k)\neq (0,0) \}$ in $C^{\bullet, \bullet}(B; A, A)$ such that $|\q|=|\q_{\ell,k}|=1$, $\q_{0,1}=d_A$, and
	\begin{equation*}
		\q\{\q\} = \widehat{\ml} (\q)
	\end{equation*}
\end{defn}

\begin{rmk}
	\label{cochain_map_induced_rmk_1}
	Taking the $(0,1)$-component of the equation $\q\{\q\} - \widehat{\ml}(\q)=0$ implies $\q_{0,1}\circ \q_{0,1}=0$, i.e. $d_A\circ d_A=0$.
	Taking the $(1,0)$-component of the same equation yields that $\q_{0,1} ( \q_{1,0}(y)) = \q_{1,0} (\ml_{1}(y))$ for $y\in D$. Since $\ml_1=d_B$ and $\q_{0,1}=d_A$, this means we have a cochain map
	\[
	\iota_{\q} := \q_{1,0} :  (B, d_B)\to (A,d_A)
	\]
	with $|\iota_\q|=|\q|=1$.
\end{rmk}

\begin{rmk}
Observe that the subcollection $\bar \q:=\{\q_{0,k}: k\geqslant 1\}$ exactly forms an $A_\infty$ algebra on $A$ in the sense of \S \ref{ss_A_infinity_alg}.
Indeed, there is a natural identification between $C^k(A, A)$ and $C^{0,k}(B; A, A)$ for $k\neq 0$. 
Hence, one can naturally embed $C^{\bullet}(A, A)$ into $C^{\bullet,\bullet}(B; A, A)$ as a subspace.
The restriction of the map $\widehat{\ml}$ in (\ref{widehat_ml_action_eq}) on $C^k(A,A)$ for $k\neq 0$ is void. Thus, restricting the OCHA equation $\q\{\q\}=\widehat{\ml}(\q)$ to the subspace $C^{\bullet}(A, A)$ yields the $A_\infty$ equation $\bar\q\{\bar\q\}=0$.
\end{rmk}

\begin{ex}
	\label{ex_OCHA}
One of the simplest examples of OCHA is as follows: Given a smooth map between two manifolds \[
i:N\to M
\]
we set $B=\Omega^*(M)$ and $A=\Omega^*(N)$ to be the space of smooth differential forms on $M$ and $N$ respectively.
Then, we put $\ml_1=d_M$ and $\q_{0,1}=d_N$ to be the de Rham differentials; put $\q_{0,2}$ to be the wedge product; put $\q_{1,0}$ to be the pullback map $i^*: \Omega^*(M)\to \Omega^*(N)$; finally, we put all other $\ml_\ell=\q_{\ell,k}=0$.
We claim this defines an OCHA in the sense of Definition \ref{ocha_defn}.
To establish $(\q\{\q\}-\widehat{\ml}(\q))_{\ell,k}=0$, we consider the following cases of $(\ell,k)$'s:

\begin{itemize}
	\itemsep 3pt
	\item If $\ell\geqslant 2$ or $\ell=1$, $k\geqslant 1$, then one can directly check $\q\{\q\}_{\ell,k}=\widehat{\ml}(\q)_{\ell,k}=0$ since $\q_{\ell,k}\neq 0$ only if $(\ell,k)=(1,0), (0,1), (0,2)$ and $\ml_\ell\neq 0$ only if $\ell=1$.
	\item If $\ell=1$, $k=0$, then given any $y\in B$, we have
	\begin{align*}
		\q\{\q\}_{1,0}(y) &= \q_{0,1}(\varnothing; \q_{1,0}(y; \varnothing)) = d_N (i^* y) \\
		\widehat{\ml} (\q)_{1,0}(y) &= \q_{1,0} (\ml_1(y); \varnothing) = i^* (d_M y)
	\end{align*}
	\item If $\ell=0$, then it amounts to check that the subset $\{\q_{0,k}: k\geqslant 1\}=\{\q_{0,1}=d_N, \q_{0,2}=\wedge\}$ gives a differential graded algebra on $\Omega^*(N)$.
\end{itemize}
\end{ex}

\subsection{An open-closed analogue of Hochschild cohomology}
Given an associative algebra $A$, it is well known that the Hochschild cochain complex $C^\bullet (A, A)=\prod_{k\geqslant 1} C^k(A, A)$ can be equipped with the Hochschild differential $\delta$, and the resulting Hochschild cohomology $HH^\bullet (A, A)$ admits the structure of a Gerstenhaber algebra \cite{Gerstenhaber_1963cohomology}.
Besides, the same construction applies if we replace $A$ with an $A_\infty$ algebra.
Just like the existence of a Hochschild differential on the space $C^{\bullet} (A, A)$ is determined by an $A_\infty$ algebra on $A$, the existence of an open-closed string analogue of Hochschild differential on $C^{\bullet, \bullet} (B; A, A)$ can be naturally induced by the structure of an \textit{open-closed homotopy algebra (OCHA)} on the pair $(B,A)$.

Let $(\ml,\q)$ be an OCHA in the sense of Definition \ref{ocha_defn}.
Recalling the closed string action in (\ref{widehat_ml_action_eq}), we define
\begin{equation}
	\label{delta_ocha_eq}
	\delta=\delta_{(\ml,\q)} : C^{\bullet,\bullet}(B; A, A) \to C^{\bullet, \bullet}(B; A, A)
\end{equation}
by
\begin{align*}
\delta(D) =\q\{D\} - (-1)^{|D|} D\{\q\} + (-1)^{|D|} \widehat{\ml}(D)
\end{align*}
By definition, we note that $|\q|=|\ml|=1$ for the shifted degrees. Therefore, $|\q\{D\}|=|D\{\q\}|=|\widehat{\ml}(D)|=|D|+1$; in other words, $\delta$ is of degree one for the shifted degrees.

\begin{prop}
	\label{differential_Hoch_prop}
	$\delta=\delta_{(\ml,\q)}$ is a differential. Namely, $\delta\delta=0$.
\end{prop}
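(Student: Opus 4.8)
The plan is to expand $\delta\delta(D)$ by substituting $\delta(D)=\q\{D\}-(-1)^{|D|}D\{\q\}+(-1)^{|D|}\widehat{\ml}(D)$ into itself. Using $|\q|=|\ml|=1$ together with (\ref{brace_sign_eq}) one has $|\q\{D\}|=|D\{\q\}|=|\widehat{\ml}(D)|=|D|+1$, so the expansion produces nine terms, which I would organize by the number of copies of $\widehat{\ml}$ they contain. The only term with two copies of $\widehat{\ml}$ is $-\widehat{\ml}(\widehat{\ml}(D))$, which vanishes at once by Proposition \ref{widetriangle_compose_L_infinity_prop}. The remaining eight terms split into a ``pure brace'' group and a ``mixed'' group, and the goal is to show that these collapse to $(\q\{\q\})\{D\}-D\{\q\{\q\}\}$ and $-\widehat{\ml}(\q)\{D\}+D\{\widehat{\ml}(\q)\}$ respectively, so that their sum is killed by the defining OCHA relation $\q\{\q\}=\widehat{\ml}(\q)$ of Definition \ref{ocha_defn}.

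For the pure-brace group, namely $\q\{\q\{D\}\}$, $(-1)^{|D|}(\q\{D\})\{\q\}$, $-(-1)^{|D|}\q\{D\{\q\}\}$ and $-(D\{\q\})\{\q\}$, I would apply the Brace Relation (Lemma \ref{brace_brace_lem}) in its $m=n=1$ form (\ref{D_E_F_eq}) three times: to $(\q\{D\})\{\q\}$, to $(D\{\q\})\{\q\}$, and --- after rewriting $\q\{\q\{D\}\}$ via the expansion of $(\q\{\q\})\{D\}$ --- once more. One observes that $(D\{\q\})\{\q\}=D\{\q\{\q\}\}$ already, because the two ``two-slot'' terms $D\{\q,\q\}$ it would otherwise produce cancel each other thanks to $(-1)^{|\q||\q|}=-1$. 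For the other two applications, the extra two-slot terms $\q\{\q,D\}$ and $\q\{D,\q\}$ appear with opposite signs and cancel, and a short bookkeeping leaves precisely $(\q\{\q\})\{D\}-D\{\q\{\q\}\}$.

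For the mixed group, which after the above simplification consists of $(-1)^{|D|+1}\widehat{\ml}(\q\{D\})$, $\widehat{\ml}(D\{\q\})$, $(-1)^{|D|}\q\{\widehat{\ml}(D)\}$ and $\widehat{\ml}(D)\{\q\}$, I would invoke the Key Lemma (Lemma \ref{brace_with_closed_string_action_lem}) in the case $m=1$: since $|\ml|=1$ it reads $\widehat{\ml}(D\{E\})=(-1)^{|E|}\widehat{\ml}(D)\{E\}+D\{\widehat{\ml}(E)\}$, i.e. $\widehat{\ml}$ is a signed derivation of the brace product. Substituting this for $\widehat{\ml}(\q\{D\})$ and $\widehat{\ml}(D\{\q\})$, the two terms in which $\widehat{\ml}$ lands on $D$ cancel the pre-existing $(-1)^{|D|}\q\{\widehat{\ml}(D)\}$ and $\widehat{\ml}(D)\{\q\}$, and what remains is $-\widehat{\ml}(\q)\{D\}+D\{\widehat{\ml}(\q)\}$. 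Adding the two surviving contributions gives $\delta\delta(D)=\big[(\q\{\q\})\{D\}-D\{\q\{\q\}\}\big]-\big[\widehat{\ml}(\q)\{D\}-D\{\widehat{\ml}(\q)\}\big]$, which is $0$ by the OCHA equation $\q\{\q\}=\widehat{\ml}(\q)$.

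Conceptually none of this is surprising: $\delta$ is the sum of the inner (pre-Lie/brace) derivation by $\q$ and the closed-string derivation $\widehat{\ml}$, and $\delta\delta=0$ is the ``Maurer--Cartan'' consequence of $\q\{\q\}=\widehat{\ml}(\q)$ and $\widehat{\ml}\widehat{\ml}=0$. The real work, and the main obstacle, is the sign bookkeeping: one must carry the shifted-degree conventions (\ref{degree_open_closed_eq}), (\ref{brace_sign_eq}) and the Koszul signs inside Lemmas \ref{brace_brace_lem} and \ref{brace_with_closed_string_action_lem} consistently through the nine-term expansion. I expect the most delicate point to be verifying that the three independent applications of the Brace Relation produce matching two-slot terms whose signs cancel --- in particular the self-cancellation of $D\{\q,\q\}$, which hinges on $|\q|$ being odd.
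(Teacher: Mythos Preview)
Your proposal is correct and follows essentially the same route as the paper's own proof: expand $\delta\delta(D)$ into nine terms, apply the brace relation (Lemma~\ref{brace_brace_lem}) to $\q\{\q\{D\}\}$, $\q\{D\}\{\q\}$, and $D\{\q\}\{\q\}$, apply the key lemma (Lemma~\ref{brace_with_closed_string_action_lem}) to $\widehat{\ml}(\q\{D\})$ and $\widehat{\ml}(D\{\q\})$, kill $\widehat{\ml}\,\widehat{\ml}(D)$ via Proposition~\ref{widetriangle_compose_L_infinity_prop}, and finish with the OCHA identity $\q\{\q\}=\widehat{\ml}(\q)$. Your explicit grouping into ``pure-brace'' and ``mixed'' blocks, and your observation that the two copies of $D\{\q,\q\}$ self-cancel because $(-1)^{|\q|^2}=-1$, make the bookkeeping cleaner than the paper's terse ``combining the above equations'' step, but the argument is the same.
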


\begin{proof}
We compute
\begin{align*}
\delta(\delta(D)) 
\qquad
= \qquad 
	\q\{\q\{D\}\} &&  + \ (-1)^{|D|} \ \ \q\{D\} \{\q\}  &&  + (-1)^{|D|+1}\ \ \widehat{\ml}(\q\{D\})  \\
	-(-1)^{|D|} \ \ \q\{D\{\q\}\}  && -\ \ D\{\q\}\{\q\}  &&   \widehat{\ml} (D\{\q\}) \\
	(-1)^{|D|} \ \q\{\widehat{\ml}(D)\} && +  \ \ \widehat{\ml}(D)\{\q\} && - \ \widehat{\ml}(\widehat{\ml}(D))
\end{align*}
Using Lemma \ref{brace_brace_lem} produces that
\begin{align*}
\q\{\q\{D\}\} &= \q\{\q\}\{D\} - \q\{\q, D\} - (-1)^{|D|} \q\{D, \q\} \\
\q\{D\} \{\q\} & = \q\{D\{\q\}\} +\q\{D, \q\} + (-1)^{|D|} \q\{\q,D\} \\
D\{\q\}\{\q\} &=D\{\q\{\q\}\} + D\{\q,\q\} +(-1)^{|\q|^2} D\{\q,\q\}
\end{align*}
Furthermore, exploiting Lemma \ref{brace_with_closed_string_action_lem} implies that
\begin{align*}
\widehat{\ml} ( \q\{D\}) 
&=  (-1)^{|D|} \ \widehat{\ml}(\q) \{D\}  + \q \{\widehat{\ml}(D)\} \\
\widehat{\ml} (D\{\q\} ) 
&=  -\widehat{\ml}(D)\{\q\} + D\{\widehat{\ml}(\q)\}
\end{align*}
Note that $\q\{\q\}\{D\}=\widehat{\ml}(\q)\{D\}$ and $D\{\q\{\q\}\}=D\{ \widehat{\ml}(\q)\}$. Note also that by Proposition \ref{widetriangle_compose_L_infinity_prop}, $\widehat{\ml} (\widehat{\ml}(D))=0$.
Finally, combining the above equations and performing a direct computation implies $\delta(\delta(D))=0$.  
\end{proof}

Thanks to the above Proposition \ref{differential_Hoch_prop}, we are able to introduce:
\begin{defn}
	\label{HH_defn}
The (open-closed type) Hochschild cohomology 
\[
HH(B; A, A) = HH(B; A, A)_{(\ml,\q)}
\]
for an OCHA $(\ml,\q)$ is defined to be the cohomology of $C^{\bullet, \bullet} (B; A, A)$ with respect to the differential $\delta=\delta_{(\ml,\q)}$.
From now on, we call $\delta=\delta_{(\ml,\q)}$ the (open-closed type) Hochschild differential.
Here we use the shifted degrees.
Moreover, given $D\in C^{\bullet, \bullet}(B; A, A)$ with $\delta(D)=0$, we denote by
$\widetilde D$ its cohomology class in $HH(B; A, A)$.
Occasionally, we may omit writing the tilde if the context is clear.
\end{defn}

\subsection{Higher structures}
This section aims to establish the proof of Proposition \ref{M_prop}.
Suppose $(B, A,\ml,\q)$ is an OCHA in the sense of Definition \ref{ocha_defn}.
By Proposition \ref{differential_Hoch_prop}, we have a differential $\delta=\delta_{(\ml,\q)}$ on $C^{\bullet,\bullet}(B; A, A)$.
Following \cite{Getzler_1993cartan}, we define an element $M=\{M_k\}_{k\geqslant 1}$ in 
\[
C^\bullet (C^{\bullet,\bullet}(B; A, A),  C^{\bullet,\bullet}(B; A, A))
\]
as follows
\[
M(D_1,\dots, D_k) = \begin{cases}
	\delta (D_1) & \text{if } k=1 \\
	\q\{D_1,\dots, D_k\} & \text{if } k>1
\end{cases}
\]

\begin{prop}[Proposition \ref{M_prop}]
	\label{M_maintext_prop}
	 $M$ is an $A_\infty$ algebra structure.
\end{prop}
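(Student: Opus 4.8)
The plan is to verify the $A_\infty$ relations
\[
\sum_{\substack{k_1+k_2=k+1\\ k_1,k_2\geqslant 1}} \sum_{i=1}^{k_1} (-1)^{\ast} M_{k_1}\big(D_1,\dots, M_{k_2}(D_i,\dots, D_{i+k_2-1}),\dots, D_k\big) = 0
\]
for $M=\{M_k\}$ directly, splitting the sum according to whether the inner operation $M_{k_2}$ and/or the outer operation $M_{k_1}$ is the differential $\delta$ or a higher brace $\q\{-,\dots,-\}$. Since $\delta(D)=\q\{D\}-(-1)^{|D|}D\{\q\}+(-1)^{|D|}\widehat{\ml}(D)$, there are essentially four regimes to handle: (i) $k_1,k_2>1$, so both operations are pure braces; (ii) $k_1>1$, $k_2=1$, so a $\delta$ is plugged into a higher brace; (iii) $k_1=1$, $k_2>1$, so a higher brace is plugged into $\delta$; and (iv) $k_1=k_2=1$, which is exactly $\delta\delta=0$, already established in Proposition \ref{differential_Hoch_prop}. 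The strategy in each regime is to expand everything via the brace definition \eqref{Gerstenhaber_prod_eq}, apply the Brace Relation (Lemma \ref{brace_brace_lem}) to reorganize nested braces, use Lemma \ref{brace_with_closed_string_action_lem} to commute $\widehat{\ml}$ past braces, and finally invoke the OCHA equation $\q\{\q\}=\widehat{\ml}(\q)$ together with $\widehat{\ml}\,\widehat{\ml}=0$ (Proposition \ref{widetriangle_compose_L_infinity_prop}) to cancel the remaining terms.

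Concretely: in regime (i) the pure-brace terms $\q\{D_1,\dots,\q\{D_i,\dots\},\dots,D_k\}$ assemble, by Lemma \ref{brace_brace_lem} applied to $\q\{\q\}\{D_1,\dots,D_k\}$, into $\q\{\q\}\{D_1,\dots,D_k\}$ minus a collection of "spread-out" terms of the form $\q\{\dots,\q,\dots\}$ with the two copies of $\q$ separated; the separated terms are precisely those produced in regime (ii) when one expands $M_{k_1}(D_1,\dots,\delta(D_i),\dots)$ and picks out the $\q\{D_i\}$ and $D_i\{\q\}$ pieces of $\delta(D_i)$ — these cancel after a sign check. The $\widehat{\ml}(D_i)$ piece of each $\delta(D_i)$ in regime (ii) is matched, via Lemma \ref{brace_with_closed_string_action_lem}, against the $\widehat{\ml}(\q\{D_1,\dots,D_k\})$ contribution coming from the $\widehat{\ml}$-part of $\delta$ in regime (iii); and $\q\{\q\}\{D_1,\dots,D_k\}$ from regime (i) cancels against the $\widehat{\ml}(\q)\{D_1,\dots,D_k\}$ term in regime (iii) using the OCHA equation $\q\{\q\}=\widehat{\ml}(\q)$. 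Any residual $\widehat{\ml}\,\widehat{\ml}$ double-closed-string term vanishes by Proposition \ref{widetriangle_compose_L_infinity_prop}. This is the same bookkeeping pattern already rehearsed in the proof of Proposition \ref{differential_Hoch_prop}, only with $D$ replaced by the $k$-tuple $(D_1,\dots,D_k)$ and the single brace replaced by the $k$-ary one.

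Alternatively, and perhaps more cleanly, one can observe that the map $\Psi\colon C^{\bullet,\bullet}(B;A,A)\to C^\bullet\big(C^{\bullet,\bullet}(B;A,A),C^{\bullet,\bullet}(B;A,A)\big)$ sending $E\mapsto \big((D_1,\dots,D_k)\mapsto E\{D_1,\dots,D_k\}\big)$ is, by the Brace Relation (Lemma \ref{brace_brace_lem}), compatible with the pre-Lie / brace structure on the target, so that the Gerstenhaber bracket $[\Psi(\q),\Psi(\q)]$ corresponds to (twice) $\Psi(\q\{\q\})$ up to the symmetrization inherent in the graded-commutator; then $M=\Psi(\q)+(\text{correction from }\widehat{\ml})$, and $M\{M\}=0$ reduces to $\q\{\q\}=\widehat{\ml}(\q)$ plus the $\widehat{\ml}$-interaction identity of Lemma \ref{brace_with_closed_string_action_lem} and $\widehat{\ml}\,\widehat{\ml}=0$. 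I expect the main obstacle to be purely a matter of signs: tracking the Koszul signs $\epsilon$ from the graded-symmetric $B$-slots through the nested braces and through $\widehat{\ml}$, and checking that the $M_1=\delta$ versus $M_{k}=\q\{-\}$ case distinction does not introduce a sign inconsistency at the boundary $k_2=1$ or $k_1=1$. As in the preceding proofs, the conceptual content is light once one trusts Lemmas \ref{brace_brace_lem} and \ref{brace_with_closed_string_action_lem}; the labor is the sign reconciliation, which I would organize exactly as in the $m=2$ computations above, reducing every sign identity to an elementary statement in $\mathbb{Z}/2\mathbb{Z}$ about the permutation signs $\epsilon(\cdot,\cdot)$.
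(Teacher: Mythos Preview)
Your strategy is correct and is exactly the paper's approach: expand the $A_\infty$ relation for $M$, then use Lemma~\ref{brace_brace_lem} and Lemma~\ref{brace_with_closed_string_action_lem} to reorganize everything into $(\q\{\q\}-\widehat{\ml}(\q))\{D_1,\dots,D_k\}=0$. One piece of bookkeeping is misdescribed, however. The ``spread-out'' terms $\q\{D_1,\dots,D_i,\q,D_{i+1},\dots,D_k\}$ are \emph{not} produced by regime~(ii): the $\q\{D_i\}$ and $D_i\{\q\}$ pieces of $\delta(D_i)$ yield $\q\{\dots,\q\{D_i\},\dots\}$ and $\q\{\dots,D_i\{\q\},\dots\}$, neither of which has a bare $\q$ occupying its own slot. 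The spread-out terms come instead from regime~(iii), specifically from applying Lemma~\ref{brace_brace_lem} (with $m=k$, $n=1$) to the $\q\{D_1,\dots,D_k\}\{\q\}$ part of $\delta(\q\{D_1,\dots,D_k\})$; that expansion produces both the $\q\{\dots,D_i\{\q\},\dots\}$ terms (which cancel the matching regime~(ii) terms) and the spread-out terms. Those spread-out terms then combine with regime~(i), the $\q\{D_i\}$ part of regime~(ii), \emph{and} the $\q\{\q\{D_1,\dots,D_k\}\}$ part of regime~(iii) to give exactly $\q\{\q\}\{D_1,\dots,D_k\}$. With this correction the argument runs precisely as you outline, and the paper's proof is your first approach carried out in a few lines; note also that $\widehat{\ml}\,\widehat{\ml}=0$ is not needed for $k\geqslant 2$, only for the $k=1$ case already handled by Proposition~\ref{differential_Hoch_prop}.
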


\begin{proof}
	By (\ref{delta_ocha_eq}), our goal is to show the vanishing of the following expression:
	\begin{align*}
		&\sum (-1)^{|D_1|+\cdots +|D_{i}|} \ \q\{D_1,\dots, D_i , \q\{D_{i+1},\dots, D_j\}, D_{j+1},\dots, D_k\} \\
		-&\sum (-1)^{|D_1|+\cdots +|D_{i}|} \ \q\{D_1,\dots, D_{i-1}, D_i\{\q\} , D_{i+1}, \dots, D_k\} \\
		+&\sum (-1)^{|D_1|+\cdots +|D_{i}|} \ \q\{D_1,\dots, D_{i-1}, \widehat\ml (D_i), D_{i+1},\dots, D_k\}  \\ 
		-& (-1)^{|\q\{D_1,\dots, D_k\}|} \ \q\{D_1,\dots, D_k\} \{\q\} \\
		+& (-1)^{|\q\{D_1,\dots, D_k\}|} \ \widehat{\ml} \big(\q\{D_1,\dots, D_k\}\big)
		=0
	\end{align*}
By Lemma \ref{brace_brace_lem}, the second and fourth terms add up to 
$
\sum (-1)^{|D_1|+\cdots+|D_i|} \ \q\{D_1,\dots, D_i, \q, D_{i+1}, \dots, D_k\}
$.
Combining this with the first term, we obtain
$
\q\{\q\} \{D_1,\dots, D_k\}
$.
By Lemma \ref{brace_with_closed_string_action_lem}, the third and last terms add up to
$
- \ \widehat \ml (\q) \{D_1,\dots, D_k\}
$.
Since $\q\{\q\}-\widehat\ml(\q)=0$, we complete the proof.
\end{proof}

Furthermore, it is natural to expect that such a construction will eventually lead to not only an $A_\infty$ structure $M=\{M_k\}_{k\geqslant 1}$ as above but also an OCHA structure. Thus, we may ask:
\begin{question}
	Is there a natural OCHA structure $(L, Q)$ on 
	\[
	C^{\bullet,\bullet} \big( \tilde C^\bullet(B, B); C^{\bullet,\bullet}(B; A,A) , C^{\bullet,\bullet}(B; A,A) \big)
	\]
	with $Q_{0,k}=M_k$?
\end{question}

We expect that the answer is positive, but addressing it fully may be beyond the scope of this paper due to the complexity of the computations. Therefore, a complete discussion will be deferred to somewhere else. However, we would like to briefly discuss the initial idea and strategy here.
To simplify the notations, we set $\mathbb B = \tilde C^\bullet (B, B) $ and $\mathbb A= C^{\bullet,\bullet}(B; A , A)$. Then, our purpose is to find an element $L$ in $\tilde C^\bullet (\mathbb B,\mathbb B)$ and an element $Q$ in $C^{\bullet,\bullet}(\mathbb B;\mathbb A, \mathbb A)$ with the desired OCHA conditions.

First off, we can find the desired $L$ as there is an induced $L_\infty$ algebra structure $L$ on $\tilde C^\bullet (\mathbb B,\mathbb B)$, analogous to \cite[Proposition 1.7]{Getzler_1993cartan} or Proposition \ref{M_maintext_prop}, but with the braces replaced by the \textit{symmetric braces} \cite{lada2005symmetric} which generalizes the previous (\ref{brace_symmetric_initial_eq}).
Specifically, given arbitrary $\mathfrak l, \mathfrak k_1,\dots,\mathfrak k_n$ in $\tilde C^\bullet (B, B)$, we define a new element $\mathfrak l\langle \mathfrak k_1,\dots,\mathfrak k_n\rangle$ by setting
\begin{align*}
\ml\langle \mathfrak k_1,\dots, \mathfrak k_n\rangle (y_{[\ell]})
&=
\sum (-1)^{\dagger} \ \ml\Big( \mk_1(y_{J_1})\wedge \cdots \wedge \mk_n(y_{J_n}) \wedge y_K \Big)
\end{align*}
where the sign is
$
\dagger = \epsilon + \sum_{i=1}^n \sum_{j=1}^{i-1} |\mk_i||y_{J_j}| 
$
with $\epsilon$ given by
$
y_{[\ell]}=(-1)^\epsilon y_{J_1}\wedge \cdots \wedge y_{J_n}\wedge y_K
$.
By \cite{lada2005symmetric}, there is a symmetric brace relation similar to Lemma \ref{brace_brace_lem}:
\begin{align*}
	&\mathfrak D\langle \mathfrak E_1,\dots, \mathfrak E_m\rangle \langle \mathfrak F_1,\dots, \mathfrak F_n\rangle \\
	&=
	\sum (-1)^\delta \ \mathfrak D \Big\langle
	\mathfrak E_1 \langle \mathfrak F_{i^1_1}, \dots, \mathfrak F_{i_{t_1}^1}\rangle ,
	\mathfrak E_2 \langle \mathfrak F_{i_1^2}, \dots, \mathfrak F_{i^2_{t_2}}\rangle ,
	\dots,
	\mathfrak E_m\langle \mathfrak F_{i_1^m}, \dots, \mathfrak F_{i_{t_m}^m}\rangle 
	\ \ , \ \
	\mathfrak F_{i_1^{m+1}} ,\dots, \mathfrak F_{i_{t_{m+1}}^{m+1}}
	\Big \rangle
\end{align*}
where the sum is taken over all unshuffle sequences
$I_1=\{i_1^1<\cdots < i_{t_1}^1\}$, ... , $I_{m+1}=\{i_1^{m+1}<\cdots < i_{t_{m+1}}^{m+1}\}$
of $[m]$ and where the sign is also decided by the Koszul sign convention, namely,
\[
\mathfrak E_1\wedge \cdots \wedge \mathfrak E_m 
\wedge \mathfrak F_1\wedge \cdots\wedge \mathfrak F_n = (-1)^\delta \  
\mathfrak E_1 \wedge \mathfrak F_{i^1_1}   \cdots   \mathfrak F_{i_{t_1}^1} \wedge
\mathfrak E_2 \wedge \mathfrak F_{i_1^2} \cdots \mathfrak F_{i^2_{t_2}} \wedge
\mathfrak E_m\wedge \mathfrak F_{i_1^m} \cdots  \mathfrak F_{i_{t_m}^m}\wedge 
\mathfrak F_{i_1^{m+1}}  \cdots  \mathfrak F_{i_{t_{m+1}}^{m+1}}
\]
Remark that one may view the above symmetric braces $\langle \cdots\rangle$ as the symmetrization of the usual braces (see \cite{lada2005symmetric,daily2005symmetrization}).
Recall that the condition for some $\ml$ to be an $L_\infty$ algebra is $\ml\langle \ml\rangle=0$.
Now, we define an element $ L=\{ L_n\}_{n\geqslant 1}$ in $\tilde C^\bullet (\mathbb B,\mathbb B)$ by
\[
L (\mathfrak k_1,\dots, \mathfrak k_n) = \begin{cases}
	\ml\langle \mathfrak k_1\rangle - (-1)^{|\mk_1|} \mk_1\langle \ml\rangle  & \text{if  } n=1 \\
	\ml\langle \mathfrak k_1,\dots,\mathfrak k_n\rangle  &\text{if  } n>1
\end{cases}
\]
By carrying out a symmetrization of the arguments in \cite[Proposition 1.7]{Getzler_1993cartan}, one may show that $L$ is an $L_\infty$ algebra structure.
The next step is to identify the desired $Q$. 
Our initial approach is based on the observation that the map $\widehat{\ml}$ in (\ref{widehat_ml_action_eq}) should naturally extend from a single $\ml$ to multiple elements $\ml_1, \dots, \ml_m$ in $\mathbb{B}$. This generalized version of closed-string actions might interact with the open-closed brace operations similarly to Lemma \ref{brace_with_closed_string_action_lem} in a more intricate manner.

\section{Gerstenhaber algebra structure}

We have introduced the definition of the open-closed version of Hochschild cohomology.
To complete the proof of Theorem \ref{intro_main_thm}, it remains to establish the Gerstenhaber algebra structure on it. 

\begin{thm}[Theorem \ref{intro_main_thm}]
	\label{gerstenhaber_ocha_thm}
Given an OCHA $(B, A, \ml, \q)$, the Hochschild cohomology $HH(B; A, A)_{(\ml,\q)}$ is naturally a Gerstenhaber algebra.
\end{thm}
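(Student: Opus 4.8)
The plan is to transplant the classical Gerstenhaber structure on $C^\bullet(A,A)$ to $C^{\bullet,\bullet}(B;A,A)$ using the open-closed brace operations of \S\ref{s_open_closed_brace}, and to verify the Gerstenhaber axioms \emph{on cohomology} by producing chain-level homotopies built out of braces, carrying the closed-string corrections through Lemma \ref{brace_with_closed_string_action_lem} at each step.

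I first fix the two operations. For the \emph{cup product} I take $D\smile E:=(-1)^{|D|}\,\q\{D,E\}=(-1)^{|D|}M_2(D,E)$, where $M=\{M_k\}$ is the $A_\infty$ structure of Proposition \ref{M_maintext_prop}, and for the \emph{Gerstenhaber bracket} I take $[D,E]:=D\{E\}-(-1)^{|D||E|}E\{D\}$. Since $M_1=\delta$, the arity-$2$ relation of the $A_\infty$ algebra $M$ is precisely the statement that $M_2$ is a chain map for $\delta$, so $\smile$ descends to $HH(B;A,A)$; the arity-$3$ relation exhibits $M_3=\q\{-,-,-\}$ as a homotopy for associativity, so on $\delta$-cocycles the associator of $\smile$ is $\pm\,\delta\big(\q\{D,E,F\}\big)$ and hence $\smile$ is associative on $HH(B;A,A)$. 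For the bracket, the $m=n=1$ case of the brace relation (Lemma \ref{brace_brace_lem}) says that the associator $D\{E\}\{F\}-D\{E\{F\}\}$ is graded symmetric in $E,F$; this (graded right-pre-Lie) identity implies, by the usual purely formal argument, that $[-,-]$ satisfies the graded Jacobi identity already at the chain level.

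It remains to check that $\delta$ is a graded derivation of both operations (so that they descend) and that $\smile$ is graded commutative and Poisson-compatible on $HH(B;A,A)$. The computational core is the OCHA analogue of Gerstenhaber's identity: substituting $\delta=\q\{-\}-(-1)^{|\cdot|}(\cdot)\{\q\}+(-1)^{|\cdot|}\widehat{\ml}(\cdot)$ into $\delta\big(D\{E\}\big)$, expanding the resulting double braces by Lemma \ref{brace_brace_lem}, and commuting every occurrence of $\widehat{\ml}$ past the braces by Lemma \ref{brace_with_closed_string_action_lem}, one obtains, up to signs, that $\delta\big(D\{E\}\big)$ equals $(\delta D)\{E\}\pm D\{\delta E\}\pm\big(D\smile E-(-1)^{|D||E|}E\smile D\big)$, the closed-string terms cancelling precisely because of the OCHA relation $\q\{\q\}=\widehat{\ml}(\q)$, exactly as in the proof of Proposition \ref{differential_Hoch_prop}. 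This single identity simultaneously yields that $\delta$ is a derivation of $[-,-]$, so the bracket descends to cohomology, and that $D\smile E-(-1)^{|D||E|}E\smile D$ is a coboundary whenever $D,E$ are cocycles, so $\smile$ is graded commutative on $HH(B;A,A)$.

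Finally, I must establish the Leibniz rule $[D,E\smile F]=[D,E]\smile F\pm E\smile[D,F]$ on $HH(B;A,A)$. At the chain level it holds only up to homotopy: expanding $[D,\q\{E,F\}]$, $\q\{[D,E],F\}$ and $\q\{E,[D,F]\}$ by Lemma \ref{brace_brace_lem}, handling the closed-string pieces by Lemma \ref{brace_with_closed_string_action_lem}, and using $\q\{\q\}=\widehat{\ml}(\q)$, one finds that the difference of the two sides is $\delta$ of an explicit brace expression plus terms proportional to $\delta D$, $\delta E$, $\delta F$, so it vanishes on cocycles. Putting the steps together, $HH(B;A,A)_{(\ml,\q)}$ carries a graded commutative associative product $\smile$ of degree $0$ and a graded Lie bracket $[-,-]$ of degree $-1$ satisfying the Poisson identity, i.e.\ a Gerstenhaber algebra, which is Theorem \ref{intro_main_thm}. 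I expect the main obstacle to be exactly the sign- and term-bookkeeping in these last two steps: $\widehat{\ml}$ may act on the outer cochain or on any inserted argument of a brace, producing a large number of terms, and one must show each is either annihilated by the OCHA equation or is a coboundary; Lemma \ref{brace_with_closed_string_action_lem} is the device that makes this tractable, but it is where essentially all the real work lies.
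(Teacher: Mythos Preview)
Your proposal is correct and follows essentially the same route as the paper: the bracket is $D\{E\}-(-1)^{|D||E|}E\{D\}$, the cup product is $\q\{D,E\}$ up to sign, and every Gerstenhaber axiom is verified by expanding with Lemma~\ref{brace_brace_lem}, pushing the $\widehat{\ml}$-terms across via Lemma~\ref{brace_with_closed_string_action_lem}, and invoking $\q\{\q\}=\widehat{\ml}(\q)$.

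The only organizational difference is a mild repackaging. You invoke the $A_\infty$ structure $M$ of Proposition~\ref{M_maintext_prop} to get the Leibniz rule for $\delta$ against $\smile$ and the associativity homotopy in one stroke, whereas the paper proves these as stand-alone Propositions~\ref{pre_yoneda_prop} and~\ref{yoneda_associative_prop}; conversely, the paper obtains ``$\delta$ is a derivation of $[\,,\,]$'' by first proving Jacobi and the separate identity $\widehat{\ml}\big([D_1,D_2]\big)=(-1)^{|D_2|}[\widehat{\ml}(D_1),D_2]+[D_1,\widehat{\ml}(D_2)]$ (Proposition~\ref{graded_lie_prop}) and then specializing $D_0=\q$, while you extract it together with graded commutativity from the single chain-level identity for $\delta(D\{E\})$, exactly as in Proposition~\ref{yoneda_commutative}. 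Two small cautions: in the paper's shifted degrees one has $|D\smallsmile E|=|D|+|E|+1$ and $|[D,E]|=|D|+|E|$, so the commutativity sign is $(-1)^{(|D|+1)(|E|+1)}$ rather than $(-1)^{|D||E|}$ (cf.\ the remark following Proposition~\ref{leibnitz_rule}); and the paper's normalization of $\smile$ carries the sign $(-1)^{|D_1|+1}$ rather than $(-1)^{|D_1|}$.
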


Recall that a \textit{Gerstenhaber algebra} refers to 
a graded commutative associative algebra $(V,\smallsmile)$ and also a graded Lie algebra $(V, [,])$ such that the two operations satisfy the Leibnitz rule:
\[
[a,b\smallsmile c]= [a,b] \smallsmile c + (-1)^{\ast} \ b\smallsmile [a,c]
\]

Compared to the usual theory of Hochschild comology, the closed-string action $\widehat{\mathfrak{l}}$ introduces an extra layer of complexity associated with the $L_\infty$ algebra $\mathfrak{l}$ within the fixed data of OCHA $(\mathfrak{l}, \q)$. Our claim is that the defects and deformations caused by $\widehat{\ml}$ in the above formulation of open-closed type Hochschild differential are all precisely killed by using Lemma \ref{brace_with_closed_string_action_lem}. 
We need to initiate the computations and preparations at the (co)chain level on the space $C^{\bullet,\bullet}(B; A, A)$ of open-closed type Hochschild cochains (\ref{hochschild_cochain_open_closed_eq}).

Firstly, let's establish the graded Lie algebra structure.
Define the \textit{Gerstenhaber bracket}
\begin{equation}
\label{bracket_pre_eq}
[D_1,D_2] = D_1\{D_2\} - (-1)^{|D_1||D_2|} D_2\{D_1\}
\end{equation}
on $C^{\bullet, \bullet}(B; A, A)$. It follows from (\ref{brace_sign_eq}) that 
$
		|[D_1,D_2]| =|D_1|+|D_2|
$.
By construction, we immediately have:
\begin{prop}
	\label{graded_lie_12_prop}
	$[D_1,D_2]=-(-1)^{|D_1||D_2|} [D_2,D_1]$
\end{prop}
Since $|\q|=1$, we observe that the Hochschild differential $\delta=\delta_{(\ml,\q)}$ in (\ref{delta_ocha_eq}) and Proposition \ref{differential_Hoch_prop} can be reinterpreted as
\begin{equation}
	\label{delta_ocha_reinterpret}
\delta (D) = [\q, D] + (-1)^{|D|} \ \widehat{\ml}(D)
\end{equation}

\begin{prop}
	\label{graded_lie_prop}
We have 
\begin{align*}
	\big[ D_0, [D_1, D_2] \big] &= \quad \quad \big[[D_0,D_1], D_2\big]  \ \ + \ \ (-1)^{|D_0||D_1|} \big[D_1, [D_0,D_2]\big] \\
	\widehat{\ml} \big([D_1,D_2]\big) &= (-1)^{|D_2|} \ [ \widehat{\ml}(D_1), D_2 ]  \ \ + \ \ [D_1, \widehat{\ml}(D_2)]
\end{align*}
\end{prop}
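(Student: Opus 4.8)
The plan is to prove the two displayed identities separately. The first one involves only the braces and not the closed-string action $\widehat{\ml}$, and is the standard ``graded pre-Lie implies graded Lie'' mechanism; the second one is a direct consequence of the key Lemma \ref{brace_with_closed_string_action_lem}.

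For the Jacobi identity, I would first specialize Lemma \ref{brace_brace_lem} to $m=n=1$, which is precisely equation (\ref{D_E_F_eq}); rewriting it isolates the associator of the Gerstenhaber product,
\[
D\{E\}\{F\} - D\{E\{F\}\} = D\{E,F\} + (-1)^{|E||F|} D\{F,E\}.
\]
A one-line check (swap the two insertions and apply the Koszul sign) shows the right-hand side is graded symmetric under $E \leftrightarrow F$, so that $D\{E\}\{F\} - D\{E\{F\}\}$ is graded symmetric in $E$ and $F$. This is the graded right-symmetric (pre-Lie) identity for the product $D\{E\}$, and by the classical fact (going back to Gerstenhaber) that the graded commutator of a graded pre-Lie product satisfies the graded Jacobi identity, the bracket $[D_1,D_2] = D_1\{D_2\} - (-1)^{|D_1||D_2|} D_2\{D_1\}$ --- which has degree $|D_1|+|D_2|$ by (\ref{brace_sign_eq}) and is graded antisymmetric by Proposition \ref{graded_lie_12_prop} --- obeys the first displayed identity. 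If a self-contained argument is preferred, the same conclusion follows by expanding each of $[D_0,[D_1,D_2]]$, $[[D_0,D_1],D_2]$ and $[D_1,[D_0,D_2]]$ via (\ref{D_E_F_eq}) into single braces, nested braces of the form $D_i\{D_j\{D_k\}\}$, and two-variable braces $D_i\{D_j,D_k\}$: the nested-brace terms cancel in pairs, and the two-variable terms cancel using the graded symmetry just observed, the only subtlety being the Koszul signs $(-1)^{|D_0||D_1|}$ and the like.

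For the second identity, I would simply invoke Lemma \ref{brace_with_closed_string_action_lem} with $m=1$ and $|\ml|=1$, which reads $\widehat{\ml}(D\{E\}) = (-1)^{|E|}\,\widehat{\ml}(D)\{E\} + D\{\widehat{\ml}(E)\}$. Applying this to the two terms of $[D_1,D_2] = D_1\{D_2\} - (-1)^{|D_1||D_2|} D_2\{D_1\}$ expresses $\widehat{\ml}([D_1,D_2])$ as a sum of four brace terms; expanding the right-hand side $(-1)^{|D_2|}[\widehat{\ml}(D_1),D_2] + [D_1,\widehat{\ml}(D_2)]$ and using $|\widehat{\ml}(D_i)| = |D_i|+1$ reproduces exactly the same four terms with matching signs. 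So this is a short sign check once Lemma \ref{brace_with_closed_string_action_lem} is available.

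I expect the main obstacle to be purely computational rather than conceptual. The $\widehat{\ml}$-Leibniz identity is essentially immediate. For the Jacobi identity, the abstract pre-Lie route reduces the ``open-closed content'' to the single remark that the two-variable-brace associator is graded symmetric, which is already built into (\ref{D_E_F_eq}); if instead one carries out the brute-force expansion, the proliferation of terms and the care needed for the signs $\epsilon$ arising from reordering the closed-string ($B$-)inputs --- the same ``additional layers of computation'' flagged in the proof of Lemma \ref{brace_brace_lem} --- is where an error is most likely to hide.
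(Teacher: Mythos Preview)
Your proposal is correct and matches the paper's own proof, which is terse: it simply says the first identity follows from the standard argument via Lemma \ref{brace_brace_lem} (exactly your pre-Lie/right-symmetric route through (\ref{D_E_F_eq})) and that the second is immediate from Lemma \ref{brace_with_closed_string_action_lem}. You have supplied the details the paper omits, but the strategy is identical.
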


\begin{proof}
	Since the closed string action $\widehat{\ml}$ is not involved in the first equation, its proof follows the standard argument using Lemma \ref{brace_brace_lem}.
The second one can be also obtained immediately from Lemma \ref{brace_with_closed_string_action_lem}.
\end{proof}

Taking $D_0=\q$ in the above and using (\ref{delta_ocha_reinterpret}), we obtain:

\begin{cor}
		\label{graded_lie_cor}
	The open-closed Hochschild differential $\delta$ is a graded derivation of $[,]$:
	\[
	\delta ([D_1,D_2]) = [\delta (D_1), D_2] + (-1)^{|D_1|} \ [D_1, \delta(D_2)]
	\]
	Therefore, the bracket $[,]$ in (\ref{bracket_pre_eq}) induces a graded Lie algebra structure on the cohomology. Abusing the notation $[,]$, it is given by
\[
[,]: HH^*(B; A, A)\times HH^*(B; A, A)\to HH^*(B; A, A) \qquad \quad [\widetilde D_1   ,  \widetilde D_2  ]  := \widetilde { [D_1, D_2] }
\]
\end{cor}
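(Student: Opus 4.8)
The plan is to verify the two displayed identities of Proposition~\ref{graded_lie_prop} by unwinding the definition \eqref{bracket_pre_eq} of the Gerstenhaber bracket into brace operations and then quoting Lemma~\ref{brace_brace_lem} and Lemma~\ref{brace_with_closed_string_action_lem} respectively, being careful only about signs. For the first identity, the closed-string action plays no role, so the argument is exactly the classical one: expand $[D_0,[D_1,D_2]]$ into a sum of terms of the form $D_i\{D_j\{D_k\}\}$, apply the $m=n=1$ case of the brace relation \eqref{D_E_F_eq} to each, and observe that all the ``two-at-the-same-level'' terms $D_i\{D_j,D_k\}$ on the two sides of the claimed graded Jacobi identity cancel in pairs after using graded antisymmetry (Proposition~\ref{graded_lie_12_prop}). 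For the second identity, expand $\widehat{\ml}([D_1,D_2]) = \widehat{\ml}(D_1\{D_2\}) - (-1)^{|D_1||D_2|}\widehat{\ml}(D_2\{D_1\})$ and apply the $m=1$ case of Lemma~\ref{brace_with_closed_string_action_lem}, which gives $\widehat{\ml}(D_1\{D_2\}) = (-1)^{|\ml||D_2|}\widehat{\ml}(D_1)\{D_2\} + D_1\{\widehat{\ml}(D_2)\}$; collecting the four resulting terms and comparing with $(-1)^{|D_2|}[\widehat{\ml}(D_1),D_2] + [D_1,\widehat{\ml}(D_2)]$ (recalling $|\ml|=1$ and $|\widehat{\ml}(D_i)| = |D_i|+1$) finishes it. Both are bookkeeping over the signs supplied by \eqref{brace_sign_eq}.

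With Proposition~\ref{graded_lie_prop} in hand, the corollary is a formal consequence. The plan is: set $D_0 = \q$ in the graded Jacobi identity of Proposition~\ref{graded_lie_prop} to get
\[
[\q,[D_1,D_2]] = [[\q,D_1],D_2] + (-1)^{|D_1|}[D_1,[\q,D_2]],
\]
using $|\q|=1$ so that the sign $(-1)^{|D_0||D_1|}$ becomes $(-1)^{|D_1|}$. Now invoke the reinterpretation \eqref{delta_ocha_reinterpret}, namely $\delta(D) = [\q,D] + (-1)^{|D|}\widehat{\ml}(D)$. Substituting $[\q,[D_1,D_2]] = \delta([D_1,D_2]) - (-1)^{|D_1|+|D_2|}\widehat{\ml}([D_1,D_2])$ and similarly rewriting $[[\q,D_1],D_2]$ and $[D_1,[\q,D_2]]$ in terms of $\delta$ and $\widehat{\ml}$, the desired Leibniz rule $\delta([D_1,D_2]) = [\delta(D_1),D_2] + (-1)^{|D_1|}[D_1,\delta(D_2)]$ will hold \emph{provided} the leftover $\widehat{\ml}$-terms match up --- and they do precisely because of the second identity of Proposition~\ref{graded_lie_prop}, which says $\widehat{\ml}([D_1,D_2]) = (-1)^{|D_2|}[\widehat{\ml}(D_1),D_2] + [D_1,\widehat{\ml}(D_2)]$. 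Here is exactly the point alluded to in the introduction: the defects introduced by the closed-string action $\widehat{\ml}$ in the differential are cancelled by Lemma~\ref{brace_with_closed_string_action_lem}.

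Having shown $\delta$ is a graded derivation of $[,]$, the bracket descends to cohomology: if $\delta D_1 = \delta D_2 = 0$ then $\delta[D_1,D_2]=0$, and if $D_1 = \delta D_1'$ is a coboundary then $[D_1,D_2] = [\delta D_1', D_2] = \delta[D_1',D_2] \mp [D_1',\delta D_2] = \delta[D_1',D_2]$ is a coboundary, so the class $\widetilde{[D_1,D_2]}$ depends only on $\widetilde{D_1}$ and $\widetilde{D_2}$; graded antisymmetry and the graded Jacobi identity pass to cohomology for free from Propositions~\ref{graded_lie_12_prop} and~\ref{graded_lie_prop}. The main obstacle, such as it is, is purely organizational: keeping the Koszul signs straight while translating between the brace-level identities and the bracket-level statements, especially tracking the shift $|\widehat{\ml}(D)| = |D|+1$ throughout. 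There is no conceptual difficulty once Lemmas~\ref{brace_brace_lem} and~\ref{brace_with_closed_string_action_lem} are granted.
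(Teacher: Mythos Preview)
Your proposal is correct and follows essentially the same approach as the paper: the paper likewise derives the corollary by setting $D_0=\q$ in Proposition~\ref{graded_lie_prop} and invoking \eqref{delta_ocha_reinterpret}, with the $\widehat{\ml}$-terms cancelling via the second identity of that proposition. Your additional remarks (re-deriving Proposition~\ref{graded_lie_prop} from Lemmas~\ref{brace_brace_lem} and~\ref{brace_with_closed_string_action_lem}, and spelling out why the bracket descends to cohomology) simply make explicit what the paper leaves implicit.
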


\vspace{1.5em}

Secondly, we are going to show the structure of a graded commutative associative algebra on the cohomology.
Define the \textit{Yoneda product}
\begin{equation}
	\label{yoneda_pre_eq}
D_1\smallsmile D_2 = (-1)^{|D_1|+1} \  \q \{D_1, D_2\}
\end{equation}
on $C^{\bullet,\bullet}(B; A, A)$. Here we are using the shifted degrees $|D_i|$. It follows from (\ref{brace_sign_eq}) that
\begin{equation}
	\label{sign_cup_eq}
			|D_1\smallsmile D_2| =|D_1|+|D_2|+1
\end{equation}

\begin{prop}
	\label{pre_yoneda_prop}
	We have
$
	\delta (D_1\smallsmile D_2) = \delta (D_1)\smallsmile D_2  + (-1)^{|D_1|+1} \ D_1\smallsmile \delta(D_2)
$.
Therefore, the pre-Yoneda product in (\ref{yoneda_pre_eq}) induces a well-defined \textit{Yoneda product}
\[
\smallsmile: HH^*(B; A, A)\times HH^*(B; A, A)\to HH^*(B; A, A) \qquad \quad \widetilde {D_1}   \smallsmile \widetilde{D_2}    := \widetilde {D_1\smallsmile D_2}
\]
where we abuse the notation.
\end{prop}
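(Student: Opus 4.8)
The plan is to expand $\delta(D_1 \smallsmile D_2)$ using the definition $D_1 \smallsmile D_2 = (-1)^{|D_1|+1}\q\{D_1,D_2\}$ together with the reinterpretation $\delta(D) = [\q,D] + (-1)^{|D|}\widehat{\ml}(D)$ from (\ref{delta_ocha_reinterpret}), and then match the terms against the desired right-hand side. Concretely, $\delta(D_1\smallsmile D_2)$ splits into a ``bracket part'' $[\q, \q\{D_1,D_2\}]$ and a ``closed-string part'' $\widehat{\ml}(\q\{D_1,D_2\})$. For the bracket part one computes $[\q,\q\{D_1,D_2\}] = \q\{\q\{D_1,D_2\}\} \pm \q\{D_1,D_2\}\{\q\}$; applying the brace relation (Lemma \ref{brace_brace_lem}) to $\q\{D_1,D_2\}\{\q\}$ produces $\q\{\q,D_1,D_2\}$, $\q\{D_1\{\q\},D_2\}$, $\q\{D_1,D_2\{\q\}\}$, $\q\{D_1,\q,D_2\}$, $\q\{D_1,D_2,\q\}$ with appropriate signs, while $\q\{\q\{D_1,D_2\}\}$ contributes further nested brace terms. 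For the closed-string part, Lemma \ref{brace_with_closed_string_action_lem} with $m=2$ gives $\widehat{\ml}(\q\{D_1,D_2\}) = \pm\widehat{\ml}(\q)\{D_1,D_2\} \pm \q\{\widehat{\ml}(D_1),D_2\} + \q\{D_1,\widehat{\ml}(D_2)\}$.

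The crucial cancellation is the OCHA equation $\q\{\q\} = \widehat{\ml}(\q)$: the terms involving $\q\{\q\}\{D_1,D_2\}$ arising from the bracket part are cancelled exactly by the term $\widehat{\ml}(\q)\{D_1,D_2\}$ arising from the closed-string part. What remains should reorganize into $\delta(D_1)\smallsmile D_2 + (-1)^{|D_1|+1}D_1\smallsmile\delta(D_2)$: the terms $\q\{[\q,D_1],D_2\}$ and $\q\{\widehat{\ml}(D_1),D_2\}$ assemble (after restoring the prefactor) into $\delta(D_1)\smallsmile D_2$, and symmetrically $\q\{D_1,[\q,D_2]\}$ and $\q\{D_1,\widehat{\ml}(D_2)\}$ assemble into $D_1\smallsmile\delta(D_2)$ up to the sign $(-1)^{|D_1|+1}$ predicted by the graded Leibniz rule and consistent with the degree shift (\ref{sign_cup_eq}). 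I would carry this out in the order: (i) expand via (\ref{delta_ocha_reinterpret}) and (\ref{yoneda_pre_eq}); (ii) apply Lemma \ref{brace_brace_lem} to the bracket part; (iii) apply Lemma \ref{brace_with_closed_string_action_lem} to the closed-string part; (iv) invoke $\q\{\q\}=\widehat{\ml}(\q)$ to kill the ``obstruction'' terms; (v) regroup into the two $\smallsmile$-terms and verify the overall signs. The passage to cohomology is then immediate: $\delta$ is a (graded, twisted) derivation of $\smallsmile$, so $\smallsmile$ descends to $HH^*(B;A,A)$, and the class $\widetilde{D_1 \smallsmile D_2}$ depends only on $\widetilde{D_1}$ and $\widetilde{D_2}$.

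The main obstacle I anticipate is purely bookkeeping: tracking the Koszul signs through the nested brace expansions. The term $\q\{\q\{D_1,D_2\}\}$ versus $\q\{\q\}\{D_1,D_2\}$ differ by lower brace terms with signs governed by $|\q|=1$, $|D_1|$, $|D_2|$, and one must be careful that the ``inner'' brace terms like $\q\{D_1\{\q\},D_2\}$ do not in fact appear or else cancel in pairs — indeed $D_i\{\q\}$-type terms should cancel between the two halves of the computation since $\delta(D_i)$ is supposed to absorb $[\q,D_i]$ and $\widehat{\ml}(D_i)$ but not a spurious $D_i\{\q\}\{\q\}$. Verifying this cancellation, and confirming the asymmetric-looking sign $(-1)^{|D_1|+1}$ on the second term (which reflects that $\smallsmile$ carries an intrinsic degree $+1$ by (\ref{sign_cup_eq}), unlike a degree-zero product), is where the care is needed; the algebraic structure of the argument, however, is identical to the classical case \cite{Gerstenhaber_1963cohomology, Getzler_1993cartan} once Lemma \ref{brace_with_closed_string_action_lem} supplies the closed-string correction.
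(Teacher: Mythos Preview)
Your proposal is correct and follows essentially the same route as the paper: both use Lemma \ref{brace_brace_lem} to expand the brace terms, Lemma \ref{brace_with_closed_string_action_lem} for the $\widehat{\ml}$ part, and the OCHA relation $\q\{\q\}=\widehat{\ml}(\q)$ to kill the obstruction. The only organizational difference is that the paper starts from $\q\{\q\}\{D_1,D_2\}$ and expands it via the brace relation (so that $\q\{\q\{D_1,D_2\}\}$, $\q\{\q\{D_1\},D_2\}$, $\q\{D_1,\q\{D_2\}\}$ and the $\q\{\ldots,\q,\ldots\}$ terms all appear at once), then subtracts the expansion of $\q\{D_1,D_2\}\{\q\}$ to recognize $[\q,\q\{D_1,D_2\}] + \q\{[\q,D_1],D_2\} + (-1)^{|D_1|}\q\{D_1,[\q,D_2]\}$ directly; you instead start from $\delta(D_1\smallsmile D_2)$ and work outward. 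One small clarification: the term $\q\{\q\{D_1,D_2\}\}$ does not itself ``contribute further nested brace terms'' --- it is already atomic --- rather, the point is that the brace relation for $\q\{\q\}\{D_1,D_2\}$ expresses it as $\q\{\q\{D_1,D_2\}\}$ plus the other pieces, and this is how the OCHA obstruction enters. Likewise, the $\q\{D_i\{\q\},\ldots\}$ terms do not cancel but pair with $\q\{\q\{D_i\},\ldots\}$ to form $\q\{[\q,D_i],\ldots\}$, which then combines with $\q\{\widehat{\ml}(D_i),\ldots\}$ to give $\q\{\delta(D_i),\ldots\}$.
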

\begin{proof}
Recall that $D\{\}=D$.
By Lemma \ref{brace_brace_lem}, we consider
\begin{align*}
\q\{\q\} \{D_1, D_2\} 
&=
\q\{\q, D_1, D_2\} + (-1)^{|D_1|} \ \q\{ D_1, \q, D_2\} + (-1)^{|D_1|+|D_2|} \q\{D_1, D_2,\q\} \\
&+
\q\{ \q\{D_1\}, D_2\} + (-1)^{|D_1|} \q\{D_1, \q\{D_2\}\} + \q\{\q\{D_1, D_2\}\} 
\\[5pt]
(-1)^{|D_1|+|D_2|} \q\{D_1, D_2\} \{\q\} 
&=
\q\{\q, D_1, D_2\} + (-1)^{|D_1|} \ \q\{ D_1, \q, D_2\} + (-1)^{|D_1|+|D_2|} \q\{D_1, D_2,\q\} \\
&+ (-1)^{|D_1|+|D_2|} \q\{ D_1, D_2\{\q\} \} + (-1)^{|D_1|} \q\{D_1\{\q\}, D_2\}
\end{align*}
In view of the bracket in (\ref{bracket_pre_eq}), we then have
\[
\q\{\q\}\{D_1, D_2\} = [\q,\q\{D_1,D_2\}]+ \q\{ [\q,D_1], D_2\} + (-1)^{|D_1|} \ \q\{D_1, [\q, D_2]\}
\]
Further utilizing (\ref{delta_ocha_reinterpret}) and the OCHA equation $\q\{\q\}=\widehat{\ml}(\q)$ (Definition \ref{ocha_defn}), the above is equivalent to
\begin{align*}
&\widehat{\ml}(\q)\{D_1, D_2\} + (-1)^{|D_1|} \ \q\{\widehat{\ml}(D_1), D_2\} + (-1)^{|D_1|+|D_2|} \ \q\{D_1, \widehat{\ml} (D_2)\} - (-1)^{|D_1|+|D_2|} \widehat{\ml}\big(\q\{D_1,D_2\}\big) \\
&= \q\{\delta(D_1), D_2\} + (-1)^{|D_1|}\q\{D_1, \delta(D_2)\} + \delta \big(\q\{D_1, D_2\}\big)
\end{align*}
Notably, the left hand side vanishes exactly because of our Lemma \ref{brace_with_closed_string_action_lem}. Eventually, by virtue of (\ref{yoneda_pre_eq}), we complete the proof.
\end{proof}

\vspace{1em}

\begin{prop}
	\label{yoneda_associative_prop}
The Yoneda product $\smallsmile$ is associative in the cohomology level. In other words, $
(\widetilde {D_1}  \smallsmile \widetilde {D_2}  ) \smallsmile\widetilde {D_3}  = \widetilde {D_1}  \smallsmile (\widetilde {D_2}   \smallsmile\widetilde {D_3}  )
$.
\end{prop}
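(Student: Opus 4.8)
The plan is to show that the associator $(D_1 \smallsmile D_2)\smallsmile D_3 - D_1\smallsmile(D_2\smallsmile D_3)$ is exact at the cochain level, i.e. that it lies in the image of $\delta = \delta_{(\ml,\q)}$ plus terms that are themselves cocycles killed by passing to cohomology. Expanding the definition \eqref{yoneda_pre_eq}, and using the sign rule \eqref{sign_cup_eq}, both sides become (up to an overall sign) nested Gerstenhaber products of the form $\q\{\q\{D_1,D_2\},D_3\}$ and $\q\{D_1,\q\{D_2,D_3\}\}$. The natural tool is Lemma \ref{brace_brace_lem} (the Brace Relation): applying it to $\q\{\q\}\{D_1,D_2,D_3\}$ expands into a sum of all the terms $\q\{\dots,\q\{\dots\},\dots\}$ with the two $\q$-arguments inserted in all admissible ways, including the two nested terms that make up the two sides of the associativity identity, together with the ``flat'' terms $\q\{D_1,D_2,D_3\}$-type expressions involving $\q$ as a separate argument.

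Concretely, I would proceed as follows. First, write down $\q\{\q\}\{D_1,D_2,D_3\}$ and expand it via Lemma \ref{brace_brace_lem}; this produces, on the one hand, $\pm\,\q\{\q\{D_1,D_2,D_3\}\}$, $\pm\,\q\{\q\{D_1,D_2\},D_3\}$, $\pm\,\q\{D_1,\q\{D_2,D_3\}\}$, and $\pm\,\q\{\q\{D_1\},D_2,D_3\}$-type terms, plus terms with $\q$ inserted as an independent slot, e.g. $\q\{\q,D_1,D_2,D_3\}$, $\q\{D_1,\q,D_2,D_3\}$, etc. Second, separately compute $(-1)^?\,\q\{D_1,D_2,D_3\}\{\q\}$ via Lemma \ref{brace_brace_lem}; this recovers the same ``independent-slot'' $\q$-terms together with terms of the form $\q\{D_i\{\q\},\dots\}$. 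Third, substitute the OCHA equation $\q\{\q\}=\widehat\ml(\q)$ from Definition \ref{ocha_defn} to rewrite $\q\{\q\}\{D_1,D_2,D_3\}$ as $\widehat\ml(\q)\{D_1,D_2,D_3\}$, and use Lemma \ref{brace_with_closed_string_action_lem} to move $\widehat\ml$ outside, turning this into $\pm\,\widehat\ml\big(\q\{D_1,D_2,D_3\}\big)$ plus $\sum_i \pm\,\q\{D_1,\dots,\widehat\ml(D_i),\dots,D_3\}$. Fourth, reassemble: collecting the ``independent-slot'' $\q$-terms and the $D_i\{\q\}$-terms and the $\widehat\ml(D_i)$-terms into instances of $\delta$ via the reinterpretation \eqref{delta_ocha_reinterpret}, exactly as in the proof of Proposition \ref{pre_yoneda_prop}, one finds that the difference of the two associations equals $\delta$ applied to a specific cochain (morally $\pm\,\q\{D_1,D_2,D_3\}$) plus correction terms that are manifestly built from $\delta(D_i)$. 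Hence, when $D_1,D_2,D_3$ are cocycles, the associator is a coboundary, which is exactly associativity in $HH^*(B;A,A)$.

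The main obstacle, as usual in this circle of ideas, is bookkeeping the Koszul signs: the open-closed braces carry the intricate sign $\ast$ from \eqref{Gerstenhaber_prod_eq} involving both the $A$-degrees $|x_{K_i}|$ and the $B$-degrees $|y_{L_j}|$, and the shifts $|D_i\smallsmile D_j| = |D_i|+|D_j|+1$ from \eqref{sign_cup_eq} must be threaded consistently through every application of Lemmas \ref{brace_brace_lem} and \ref{brace_with_closed_string_action_lem}. I would handle this exactly as in Proposition \ref{pre_yoneda_prop}: isolate the cochain-level identity
\[
\q\{\q\}\{D_1,D_2,D_3\} \ \pm\ \q\{D_1,D_2,D_3\}\{\q\} \ = \ (\text{nested } \q\text{-terms}) \ +\ (\delta\text{-exact and }\delta(D_i)\text{-terms}),
\]
verify the signs only on this identity, and then pass to cohomology where the $\widehat\ml$-contributions cancel precisely by Lemma \ref{brace_with_closed_string_action_lem} — this is the step that genuinely uses the OCHA structure rather than just the $A_\infty$ structure $\bar\q$, and it is where the ``defects caused by $\widehat\ml$ are killed'', mirroring the mechanism already seen in Propositions \ref{differential_Hoch_prop} and \ref{pre_yoneda_prop}.
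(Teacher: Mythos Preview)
Your proposal is correct and follows essentially the same approach as the paper: expand $\q\{\q\}\{D_1,D_2,D_3\}$ and $\q\{D_1,D_2,D_3\}\{\q\}$ via Lemma~\ref{brace_brace_lem}, combine them so that the ``independent-slot'' terms cancel and the nested terms assemble into $[\q,\,\cdot\,]$ and the two associated cup products, then invoke the OCHA relation $\q\{\q\}=\widehat\ml(\q)$ and Lemma~\ref{brace_with_closed_string_action_lem} to see that the $\widehat\ml$-contributions cancel exactly, leaving the associator equal to $\delta\big(\q\{D_1,D_2,D_3\}\big)$ plus terms involving $\delta(D_i)$. The paper carries out precisely this computation with all signs explicit.
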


\begin{proof}
Using Lemma \ref{brace_brace_lem} infers that $\q\{\q\}\{D_1, D_2, D_3\}$ is equal to
\begin{align*}
 \q \{\q\{D_1, D_2, D_3\}\}  \ \ \ + \ \ \ &  \q\{\q, D_1, D_2, D_3\}    && \  + &&  \q\{\q\{D_1\}, D_2, D_3\} && + && \q \{\q\{D_1, D_2\}, D_3\}   \\
(-1)^{|D_1|} & \q\{D_1, \q, D_2, D_3\}  && + (-1)^{|D_1|} && \q\{D_1, \q\{D_2\}, D_3\}  && + (-1)^{|D_1|} && \q \{D_1, \q\{D_2, D_3\} \}   \\
 (-1)^{|D_1|+|D_2|} &\q\{D_1, D_2, \q, D_3\} && + (-1)^{|D_1|+|D_2|} && \q\{D_1, D_2, \q\{D_3\}\}\\
 (-1)^{|D_1|+|D_2|+|D_3|} & \q \{D_1, D_2, D_3, \q\} 
\end{align*}
In the mean time, Lemma \ref{brace_brace_lem} also implies
\begin{align*}
\q\{D_1,D_2,D_3\}\{\q\}  \  \  = \  \  \  \
& \q\{D_1, D_2, D_3, \q\}  && + && \q\{D_1, D_2, D_3\{\q\}\} \\
(-1)^{|D_3|} & \q\{D_1, D_2, \q, D_3\} && + \ (-1)^{|D_3|} && \q\{D_1, D_2\{\q\}, D_3 \} \\
(-1)^{|D_2|+|D_3|} & \q \{D_1, \q, D_2, D_3\} && + \ (-1)^{|D_2|+|D_3|} && \q\{D_1\{\q\}, D_2, D_3 \}  \\
(-1)^{|D_1|+|D_2|+|D_3|} &\q\{\q, D_1, D_2, D_3\} 
\end{align*}
Combining the above two equations, we conclude that
\begin{align*}
\q\{\q\}\{D_1, D_2, D_3\} 
= \ \big[\q, \q\{D_1,D_2,D_3\} \big]  \ \ \ \ + \ \ \
&
\q\{ [\q, D_1] , D_2, D_3\}  && + (-1) ^{|D_2|+1} && (D_1\smallsmile D_2)\smallsmile D_3 \\
(-1)^{|D_1|} & \q\{D_1, [\q, D_2] , D_3\}  && + (-1)^{|D_2|} && D_1\smallsmile (D_2\smallsmile D_3) \\
(-1)^{|D_1|+|D_2|} & \q\{D_1, D_2,  [\q, D_3] \} 
\end{align*}
By the OCHA relation $\q\{\q\}=\widehat{\ml}(\q)$ and by the reinterpretation (\ref{delta_ocha_reinterpret}) of the open-closed Hochschild differential, we obtain that
\begin{align*}
\widehat{\ml}(\q) \{D_1, D_2, D_3\} 
	= \ 
& \delta\big(\q\{D_1, D_2, D_3\}\big)  && + (-1)^{|D_1|+|D_2|+|D_3|} && \widehat{\ml} \big(\q\{D_1,D_2,D_3\} \big) && + (-1) ^{|D_2|+1} && (D_1\smallsmile D_2)\smallsmile D_3   \\
+ \ & \q\{\delta(D_1), D_2, D_3\}  && - (-1)^{|D_1|} &&\q\{  \widehat{\ml}  (D_1 ),D_2,D_3\} && + (-1)^{|D_2|} && D_1\smallsmile (D_2\smallsmile D_3)  \\
+ \ (-1)^{|D_1|}  
& \q\{ D_1, \delta(D_2), D_3\} && - (-1)^{|D_1|+|D_2|} &&  \q\{D_1,\widehat{\ml}(D_2),D_3\}   \\
+ \ (-1)^{|D_1|+|D_2|}
&  \q\{D_1, D_2, \delta(D_3)\} && - (-1)^{|D_1|+|D_2|+|D_3|} &&  \q\{D_1, D_2, \widehat{\ml}(D_3)\} 
\end{align*}
Thanks to our Lemma \ref{brace_with_closed_string_action_lem}, the second column on the right-hand side of the equation precisely cancels out the left-hand side. The resulting equation is exactly what we need to complete the proof.
\end{proof}

\begin{prop}
	\label{yoneda_commutative}
	The Yoneda product is graded commutative in the cohomology level. In other words,
$
	\widetilde {D_1}  \smallsmile \widetilde {D_2}   = (-1)^{(|D_1|+1)(|D_2|+1)} \  \widetilde {D_2} \smallsmile \widetilde {D_1}  
$.
\end{prop}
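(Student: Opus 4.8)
The plan is to run the classical Gerstenhaber homotopy argument, with the element $D_1\{D_2\}\in C^{\bullet,\bullet}(B;A,A)$ playing the role of the cochain homotopy between $D_1\smallsmile D_2$ and $(-1)^{(|D_1|+1)(|D_2|+1)}D_2\smallsmile D_1$; the new feature is that one must check that the closed-string contributions produced by $\widehat{\ml}$ cancel, which is exactly what the $m=1$ case of Lemma~\ref{brace_with_closed_string_action_lem} provides. First I would compute $\delta(D_1\{D_2\})$ from the reinterpreted differential~(\ref{delta_ocha_reinterpret}), so that it is $\q\{D_1\{D_2\}\}-(-1)^{|D_1|+|D_2|}D_1\{D_2\}\{\q\}+(-1)^{|D_1|+|D_2|}\widehat{\ml}(D_1\{D_2\})$. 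To the first two summands I apply the $m=n=1$ brace relations of Lemma~\ref{brace_brace_lem}, namely
\[
\q\{D_1\}\{D_2\}=\q\{D_1\{D_2\}\}+\q\{D_1,D_2\}+(-1)^{|D_1||D_2|}\q\{D_2,D_1\},
\]
together with $D_1\{D_2\}\{\q\}=D_1\{D_2\{\q\}\}+D_1\{D_2,\q\}+(-1)^{|D_2|}D_1\{\q,D_2\}$ and $D_1\{\q\}\{D_2\}=D_1\{\q\{D_2\}\}+D_1\{\q,D_2\}+(-1)^{|D_2|}D_1\{D_2,\q\}$; to the last summand I apply Lemma~\ref{brace_with_closed_string_action_lem} with $m=1$, which splits it as $(-1)^{|D_2|}\widehat{\ml}(D_1)\{D_2\}+D_1\{\widehat{\ml}(D_2)\}$.

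Then I would substitute $\q\{D_i\}=\delta(D_i)+(-1)^{|D_i|}D_i\{\q\}-(-1)^{|D_i|}\widehat{\ml}(D_i)$ coming from the definition~(\ref{delta_ocha_eq}) of $\delta$, and expand using multilinearity of the braces. A careful but routine sign bookkeeping shows that all the ``mixed'' terms $D_1\{\q,D_2\}$, $D_1\{D_2,\q\}$, $D_1\{D_2\{\q\}\}$ cancel in pairs, and --- the essential point --- so do the two genuinely new terms $\widehat{\ml}(D_1)\{D_2\}$ and $D_1\{\widehat{\ml}(D_2)\}$, leaving precisely
\[
\delta\big(D_1\{D_2\}\big)=\delta(D_1)\{D_2\}+(-1)^{|D_1|}D_1\{\delta(D_2)\}-\q\{D_1,D_2\}-(-1)^{|D_1||D_2|}\q\{D_2,D_1\},
\]
the open-closed analogue of Gerstenhaber's formula. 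Rewriting the last two terms via~(\ref{yoneda_pre_eq}) and multiplying through by $(-1)^{|D_1|+1}$ yields the cochain identity
\[
D_1\smallsmile D_2-(-1)^{(|D_1|+1)(|D_2|+1)}D_2\smallsmile D_1=(-1)^{|D_1|+1}\delta(D_1)\{D_2\}-D_1\{\delta(D_2)\}-(-1)^{|D_1|+1}\delta\big(D_1\{D_2\}\big),
\]
the extra sign relative to $(-1)^{|D_1||D_2|+|D_1|+|D_2|}$ being accounted for by the $+1$'s in the exponents of the shifted degrees. Finally, if $D_1$ and $D_2$ are $\delta$-cocycles the first two terms on the right vanish, so the difference $D_1\smallsmile D_2-(-1)^{(|D_1|+1)(|D_2|+1)}D_2\smallsmile D_1$ equals the coboundary $-(-1)^{|D_1|+1}\delta(D_1\{D_2\})$; passing to cohomology classes (legitimate by Proposition~\ref{pre_yoneda_prop}) gives the asserted graded commutativity, and combined with Proposition~\ref{yoneda_associative_prop} this completes the graded commutative associative algebra part of the Gerstenhaber structure.

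The main obstacle is the sign accounting, and inside it the verification that the two $\widehat{\ml}$-terms generated by expanding $\delta(D_1\{D_2\})$ exactly annihilate each other; this is the same mechanism used throughout Section~\ref{s_open_closed_brace}, so that the defects introduced by the closed-string action $\widehat{\ml}$ leave no residual obstruction and the open-closed cup product is as commutative on cohomology as the ordinary one. A minor additional point is to recognize at the outset that the correct homotopy is $D_1\{D_2\}$ itself rather than an antisymmetrized version, which is forced by matching the asymmetric terms $\q\{D_1,D_2\}$ and $\q\{D_2,D_1\}$ that must appear.
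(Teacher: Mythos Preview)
Your proposal is correct and follows essentially the same approach as the paper: both arguments use the three $m=n=1$ instances of Lemma~\ref{brace_brace_lem} together with the $m=1$ case of Lemma~\ref{brace_with_closed_string_action_lem}, and arrive at the same cochain identity expressing $D_1\smallsmile D_2-(-1)^{(|D_1|+1)(|D_2|+1)}D_2\smallsmile D_1$ as $\delta(D_1\{D_2\})$ plus terms vanishing on cocycles. The only difference is organizational---the paper first packages the brace relations into an identity for $[\q,D_1]\{D_2\}$ and then inserts the $\widehat{\ml}$-correction via~(\ref{delta_ocha_reinterpret}), whereas you expand $\delta(D_1\{D_2\})$ directly---but the ingredients and the mechanism by which the closed-string terms cancel are identical.
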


\begin{proof}
We apply Lemma \ref{brace_brace_lem} three times as follows:
\begin{align*}
	\q\{D_1\}\{D_2\} 
	&=
	\q\{D_1 \{D_2\} \} +\q\{D_1, D_2\} + (-1)^{|D_1||D_2|} \q\{D_2,D_1\} \\
D_1\{\q\}\{D_2\} 
	&= D_1\{\q\{D_2\}\} + D_1\{\q, D_2\} + (-1)^{|D_2|} D_1\{D_2, \q\} \\
D_1\{D_2\}\{\q\}
	&= D_1\{D_2\{\q\}\} + D_1\{D_2, \q\} + (-1)^{|D_2|} D_1\{\q, D_2\}
\end{align*}
We consider the first equation, add $(-1)^{|D_1|+1}$ times the second equation, and then add $(-1)^{|D_1|+|D_2|}$ times the third equation, yielding the following result:
\begin{align*}
[\q, D_1] \{D_2\} = [\q, D_1\{D_2\} ] 
+
(-1)^{|D_1|+1} D_1\{[\q,D_2]\}
+
(-1)^{|D_1|+1} D_1\smallsmile D_2 + (-1)^{|D_1||D_2|+|D_2|+1} D_2\smallsmile D_1 
\end{align*}
This part is straightforward. On the other hand, Lemma \ref{brace_with_closed_string_action_lem} implies:
\[
\widehat{\ml} (D_1\{D_2\}) = (-1)^{|D_2|} \widehat{\ml} (D_1) \{D_2\} + D_1\{\widehat{\ml}(D_2)\}
\]
Combining this with (\ref{delta_ocha_reinterpret}), we conclude
\begin{align*}
	(-1)^{|D_1|} \Big(D_1\smallsmile D_2 - (-1)^{(|D_1|+1)(|D_2|+1)} \ D_2\smallsmile D_1 \Big) 
	&
	= \delta (D_1\{D_2\} ) - \delta (D_1)\{D_2\} - (-1)^{|D_1|} \{\delta (D_2)\}
\end{align*}
The proof is now complete.
\end{proof}

Finally, it remains to establish the compatibility between the graded Lie algebra structure $[,]$ and the graded commutative associative algebra structure $\smallsmile$, namely:

\begin{prop}
	\label{leibnitz_rule}
We have $[\widetilde {D_1}, \widetilde {D_2} \smallsmile \widetilde {D_3} ]= [\widetilde {D_1}, \widetilde {D_2}] \smallsmile \widetilde{D_3} + (-1)^{|D_1|(|D_2|+1)} \  \widetilde{D_2} \smallsmile [\widetilde {D_1} , \widetilde {D_3}]$.
\end{prop}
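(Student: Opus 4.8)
The plan is to establish, at the cochain level and for arbitrary $D_1,D_2,D_3\in C^{\bullet,\bullet}(B;A,A)$ (not assumed $\delta$-closed), a homotopy identity of the form
\begin{align*}
&[D_1,D_2\smallsmile D_3]-[D_1,D_2]\smallsmile D_3-(-1)^{|D_1|(|D_2|+1)}\,D_2\smallsmile[D_1,D_3] \\
&\qquad =\ \pm\,\delta\big(D_1\{D_2,D_3\}\big)+R,
\end{align*}
where $R$ is a signed sum of the three terms $(\delta D_1)\{D_2,D_3\}$, $D_1\{\delta D_2,D_3\}$, $D_1\{D_2,\delta D_3\}$, each carrying a factor $\delta D_i$. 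Granting this, if $D_1,D_2,D_3$ are $\delta$-cocycles then $R=0$ and the left-hand side is a $\delta$-coboundary, hence vanishes in $HH(B;A,A)$; since $\smallsmile$ and $[\,,\,]$ already descend to the cohomology by Proposition~\ref{pre_yoneda_prop} and Corollary~\ref{graded_lie_cor}, this is precisely the asserted Leibniz rule, and together with Propositions~\ref{graded_lie_prop}, \ref{yoneda_associative_prop} and \ref{yoneda_commutative} it completes the proof of Theorem~\ref{gerstenhaber_ocha_thm}. One checks from (\ref{brace_sign_eq}) and (\ref{sign_cup_eq}) that $D_1\{D_2,D_3\}$ has shifted degree $|D_1|+|D_2|+|D_3|$, one less than that of $[D_1,D_2\smallsmile D_3]$, so this candidate primitive is degree-correct.

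To produce the identity I would argue exactly as in the proofs of Propositions~\ref{pre_yoneda_prop}, \ref{yoneda_associative_prop} and \ref{yoneda_commutative}. First, unwinding (\ref{bracket_pre_eq}) and (\ref{yoneda_pre_eq}) rewrites the left-hand side as a fixed linear combination of the six brace expressions $D_1\{\q\{D_2,D_3\}\}$, $\q\{D_2,D_3\}\{D_1\}$, $\q\{D_1\{D_2\},D_3\}$, $\q\{D_2\{D_1\},D_3\}$, $\q\{D_2,D_1\{D_3\}\}$, $\q\{D_2,D_3\{D_1\}\}$; one application of the brace relation Lemma~\ref{brace_brace_lem} to $\q\{D_2,D_3\}\{D_1\}$ then turns this into a combination of triple braces $\q\{-,-,-\}$ and nested double braces. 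Second, using the reinterpretation (\ref{delta_ocha_reinterpret}) of the differential,
\begin{align*}
\delta\big(D_1\{D_2,D_3\}\big) &= \q\{D_1\{D_2,D_3\}\}-(-1)^{|D_1|+|D_2|+|D_3|}D_1\{D_2,D_3\}\{\q\} \\
&\qquad +(-1)^{|D_1|+|D_2|+|D_3|}\,\widehat{\ml}\big(D_1\{D_2,D_3\}\big),
\end{align*}
I would expand $\q\{D_1\{D_2,D_3\}\}$ and $D_1\{D_2,D_3\}\{\q\}$ through three further applications of Lemma~\ref{brace_brace_lem} — namely to $\q\{D_1\}\{D_2,D_3\}$, to $D_1\{\q\}\{D_2,D_3\}$, and to $D_1\{D_2,D_3\}\{\q\}$ — and expand the closed-string term by the key Lemma~\ref{brace_with_closed_string_action_lem} with $m=2$.

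Third, in each occurrence one replaces $\q\{D_i\}$ by $\delta D_i+(-1)^{|D_i|}D_i\{\q\}-(-1)^{|D_i|}\widehat{\ml}(D_i)$, which is just the definition (\ref{delta_ocha_eq}) of $\delta$ rearranged. After this substitution the brace terms carrying a $\q$ in an inner slot cancel in pairs against the expansion of $D_1\{D_2,D_3\}\{\q\}$; crucially, the contributions $\widehat{\ml}(D_1)\{D_2,D_3\}$, $D_1\{\widehat{\ml}(D_2),D_3\}$, $D_1\{D_2,\widehat{\ml}(D_3)\}$ coming from that substitution cancel precisely against the output of Lemma~\ref{brace_with_closed_string_action_lem} — this is the only place where the closed-string data $\ml$ genuinely intervenes, and it works for the same structural reason as in Propositions~\ref{pre_yoneda_prop}--\ref{yoneda_commutative}. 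What remains is exactly the triple-and-nested-brace combination identified in the first step, i.e. the Leibniz defect, up to an overall sign, together with the three $\delta D_i$-correction terms making up $R$. Assembling the two computations gives the chain-level identity, and the proposition follows.

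I expect the sole genuine obstacle to be sign bookkeeping: there are on the order of two dozen brace terms to carry through the three instances of Lemma~\ref{brace_brace_lem}, the one instance of Lemma~\ref{brace_with_closed_string_action_lem}, and the $\q\{D_i\}\leftrightarrow D_i\{\q\}$ substitutions, and one must verify that all the Koszul signs match on the nose — tedious but entirely mechanical. It is worth remarking that the OCHA equation $\q\{\q\}=\widehat{\ml}(\q)$ itself does not appear to be needed for this particular identity (it has already done its work in making $\delta$ a graded derivation of $\smallsmile$ and $[\,,\,]$); the only substantive inputs are the brace relations of Lemma~\ref{brace_brace_lem} and the interaction Lemma~\ref{brace_with_closed_string_action_lem}, which is again the mechanism by which all the ``deformation'' terms produced by the closed-string action are absorbed.
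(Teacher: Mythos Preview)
Your proposal is correct and follows essentially the same route as the paper. The paper also expands the four expressions $\q\{D_1\}\{D_2,D_3\}$, $D_1\{\q\}\{D_2,D_3\}$, $\q\{D_2,D_3\}\{D_1\}$, $D_1\{D_2,D_3\}\{\q\}$ via Lemma~\ref{brace_brace_lem}, combines them with suitable signs so that the inner $\q\{D_i\}$ and $D_i\{\q\}$ pair into $[\q,D_i]$, then adds the $m=2$ instance of Lemma~\ref{brace_with_closed_string_action_lem} (multiplied by the appropriate sign) and invokes (\ref{delta_ocha_reinterpret}) to arrive at the chain-level identity
\[
(-1)^{|D_1|+|D_2|}\big(\text{Leibniz defect}\big)+\delta\big(D_1\{D_2,D_3\}\big)=\delta(D_1)\{D_2,D_3\}+(-1)^{|D_1|}D_1\{\delta(D_2),D_3\}+(-1)^{|D_1|+|D_2|}D_1\{D_2,\delta(D_3)\}.
\]
Your organization differs only cosmetically: you start from the Leibniz defect and from $\delta(D_1\{D_2,D_3\})$ separately and then match, whereas the paper assembles the four brace expansions first; and your ``substitute $\q\{D_i\}$'' step is exactly the paper's ``pair into $[\q,D_i]$ and use (\ref{delta_ocha_reinterpret})'' step, phrased in reverse. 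Your remark that the OCHA relation $\q\{\q\}=\widehat{\ml}(\q)$ is not invoked in this particular proposition is also borne out by the paper's argument.
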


\begin{rmk}
The sign may differ from the standard convention due to the use of shifted degrees. For example, the cup product is often considered to be of degree zero in the literature, whereas we have $|D_1 \smallsmile D_2| = |D_1| + |D_2| + 1$ as in (\ref{sign_cup_eq}). However, we can make the following modification: Define $d(D) = |D| + 1$. Then, $d(D_1 \smallsmile D_2) = d(D_1) + d(D_2)$, and the sign in the proposition becomes $(d(D_1) - 1) d(D_2)$, which aligns with one of the standard sign conventions, e.g., \cite[Page 3]{tamarkin2000noncommutative}.
\end{rmk}

\begin{proof}[Proof of Proposition \ref{leibnitz_rule}]
Utilizing Lemma \ref{brace_brace_lem} four times as follows:
{
	\allowdisplaybreaks
\begin{align*}
\q\{D_1\}\{D_2,D_3\}  
\ \ \ \
= \ \ \ \ \\
&
\q\{D_1,D_2,D_3\}   && \ \  + &&  \q\{D_1\{D_2\}, D_3\}   && \  + &&  \q\{D_1\{D_2,D_3\}\} \\
+ \ 
(-1)^{|D_1||D_2|} & \q\{D_2,D_1,D_3\}   && \  + \ (-1)^{|D_1||D_2|} &&  \q\{D_2, D_1\{D_3\} \}  \\
+ \
(-1)^{|D_1|(|D_2|+|D_3|)} & \q\{D_2,D_3, D_1\} \\[10pt]
D_1\{\q\}\{D_2,D_3\}  
\ \ \ \
= \ \ \ \ \\
&
D_1\{\q ,D_2,D_3\}   && \ \  + &&  D_1\{\q\{D_2\}, D_3\}   && \  + &&  D_1 \{\q\{D_2,D_3\}\} \\
+ \ 
(-1)^{|D_2|} & D_1\{D_2,\q,D_3\}   && \  + \ (-1)^{|D_2|} &&  \q\{D_2, \q\{D_3\} \}  \\
+ \
(-1)^{|D_2|+|D_3|} & D_1\{D_2,D_3, \q\} 
\end{align*}
\begin{align*}
\q\{D_2,D_3\}\{D_1\}
\ \ \ 
= \ \ \ 
(-1)^{|D_1|(|D_2|+|D_3|)} 
&
\q\{D_1,D_2,D_3\}  && \   + \ (-1)^{|D_1||D_3|} &&  \q\{D_2\{D_1\}, D_3\}  \\
+ \ 
(-1)^{|D_1||D_3|} & \q\{D_2,D_1,D_3\}  && \  \  + &&  \q\{D_2,D_3\{D_1\}\}  \\
+ \
&  \q\{D_2,D_3,D_1\} \\[10pt]
D_1\{D_2,D_3\} \{\q\} 
\ \ \ 
= \ \ \ 
(-1)^{|D_2|+|D_3|} 
&
D_1\{\q,D_2,D_3\}  && \   + \ (-1)^{|D_3|} &&  D_1\{D_2\{\q\}, D_3\}  \\
+ \ 
(-1)^{|D_3|} & D_1\{D_2,\q,D_3\}  && \  \  + &&  D_1\{D_2,D_3\{\q\}\}  \\
+ \
&  D_1\{D_2,D_3,\q\} 
\end{align*}
}
The first equation, adding $(-1)^{|D_1|+1}$ times the second equation, $(-1)^{1+|D_2|+|D_3|}$ times the third equation, and $(-1)^{|D_1|(1+|D_2|+|D_3|)}$ times the fourth equation, yields the following:
\begin{align*}
&
(-1)^{|D_1|+|D_2|} \ 
\Big(  [D_1, D_2\smallsmile D_3] - [D_1,D_2]\smallsmile D_3  - (-1)^{|D_1||D_2|+|D_1|}  D_2\smallsmile [D_1,D_3]  \Big)
+
[\q, D_1\{D_2,D_3\}]
\\
= \
& 
[\q,D_1] \{D_2,D_3\}  
+
(-1)^{|D_1|}  D_1\{[\q,D_2], D_3 \} 
+
(-1)^{|D_1|+|D_2|} \ D_1\{D_2,  [\q, D_3] \} 
\end{align*}
Motivated by (\ref{delta_ocha_reinterpret}), we use Lemma \ref{brace_with_closed_string_action_lem} to obtain:
\begin{align*}
\widehat{\ml}\big( D_1\{D_2,D_3\} \big) = (-1)^{|D_2|+|D_3|} \ \widehat{\ml}(D_1)\{D_2,D_3\} +  (-1)^{|D_3|} \ D_1\{ \widehat{\ml}(D_2), D_3\} + D_1\{D_2, \widehat{\ml}(D_3)\}
\end{align*}
Adding this equation, multiplied by $(-1)^{|D_1|+|D_2|+|D_3|}$, to the previous equation implies:
\begin{align*}
	&
	(-1)^{|D_1|+|D_2|} \ 
	\Big(  [D_1, D_2\smallsmile D_3] - [D_1,D_2]\smallsmile D_3  - (-1)^{|D_1||D_2|+|D_1|}  D_2\smallsmile [D_1,D_3]  \Big)
	+
	\delta \big( D_1\{D_2,D_3\}\big)
	\\
	= \
	& 
	\delta( D_1) \{D_2,D_3\}  
	+
	(-1)^{|D_1|}  D_1\{ \delta(D_2) , D_3 \} 
	+
	(-1)^{|D_1|+|D_2|} \ D_1\{D_2, \delta(D_3) \} 
\end{align*}
Reducing it to the cohomology level completes the proof.
\end{proof}

\begin{proof}[Proof of Theorem \ref{intro_main_thm} and \ref{gerstenhaber_ocha_thm}]
	The graded Lie structure is established by Proposition \ref{graded_lie_12_prop}, Proposition \ref{graded_lie_prop}, and Corollary \ref{graded_lie_cor}. As for the graded commutative associative algebra structure, we introduce the Yoneda product in Proposition \ref{pre_yoneda_prop} and prove that it is well-defined. Additionally, we demonstrate that the Yoneda product is associative in Proposition \ref{yoneda_associative_prop} and graded commutative in Proposition \ref{yoneda_commutative}.
	Lastly, the compatibility of these two structures is achieved by Proposition \ref{leibnitz_rule}.
\end{proof}

\bibliographystyle{alpha}
\bibliography{mybib_spectral}	

\end{document}